\newcommand{\RR}{{\mathbb R}}
\newtheorem{theo}{Theorem}
\newtheorem{prop}{Proposition}[section]
\newtheorem{lem}[prop]{Lemma}
\numberwithin{equation}{section}
\DeclareMathOperator{\comp}{comp}
\DeclareMathOperator{\Diag}{Diag}
\DeclareMathOperator{\HS}{HS}
\DeclareMathOperator{\Op}{Op}
\DeclareMathOperator{\supp}{supp}
\DeclareMathOperator{\Vol}{Vol}
\DeclareMathOperator{\vol}{vol}
\DeclareMathOperator{\WF}{WF}
\DeclareMathOperator{\Tr}{Tr}
\def\WFh{\WF_h}
\newcommand{\negint}{{\int\negthickspace\negthickspace\negthickspace
\negthinspace -}}
\title[Quantum ergodicity for restrictions]
{Quantum ergodicity for restrictions to hypersurfaces}
\author{Semyon Dyatlov}
\email{dyatlov@math.berkeley.edu}
\author{Maciej Zworski}
\email{zworski@math.berkeley.edu}
\address{Department of Mathematics, Evans Hall, University of California,
Berkeley, CA 94720, USA}
\begin{document}

\begin{abstract}
Quantum ergodicity theorem states that for quantum systems with 
ergodic classical flows, eigenstates are, in average, uniformly 
distributed on energy surfaces. We show that if $ N$  is a hypersurface 
in the position space satisfying a simple dynamical condition,
the restrictions of eigenstates to $ N$ are also quantum ergodic. 
\end{abstract}

\maketitle

\section{Introduction}
\label{int}

In a recent paper~\cite{t-z} Toth and Zelditch proved a remarkable
result stating that if $ ( M , g ) $ is a compact manifold with an
ergodic geodesic flow, then quantum ergodicity holds for restrictions
of eigenfunctions to hypersurfaces satisfying a certain dynamical
condition. In an earlier paper \cite{t-zb} they established a similar
result for bounded domains in $ \RR^n $ whose boundaries are piecewise
smooth and whose billiard flows are ergodic.

The purpose of this note is to provide a short proof of a
semiclassical theorem which simultaneously generalizes both results.
Our approach avoids global constructions and calculations by reducing
equidistribution for restrictions to equidistribution in the ambient
manifold. The geometric condition \eqref{eq:h-dynl} enters to obtain a
decorrelation between contributions to the restrictions coming from
different parts of phase space. The proof uses some ideas of
\cite[Appendix~D]{d-g} but we do not refer to any results from that
paper.

For the standard quantum ergodicity result established by  
Shnirelman, Zelditch and Colin de Verdi\`ere, see 
\cite{d-g},\cite{HMR},\cite{t-zb},\cite{t-z} and
references given there.  The case of Riemannian manifolds with
piecewise smooth boundaries was established in a special case by
G\'erard--Leichtnam~\cite{GeLe}, and by Zelditch and the second author
\cite{z-z} in general.  A semiclassical version of quantum ergodicity
was first provided by Helffer--Martinez--Robert~\cite{HMR}.

To make the presentation more clear, in the introduction we will work
in the setting of manifolds without boundary, referring to
Appendix~\ref{s:appendix-a} for modifications in the case of manifolds
with piecewise smooth boundaries.

%
%
\begin{figure}
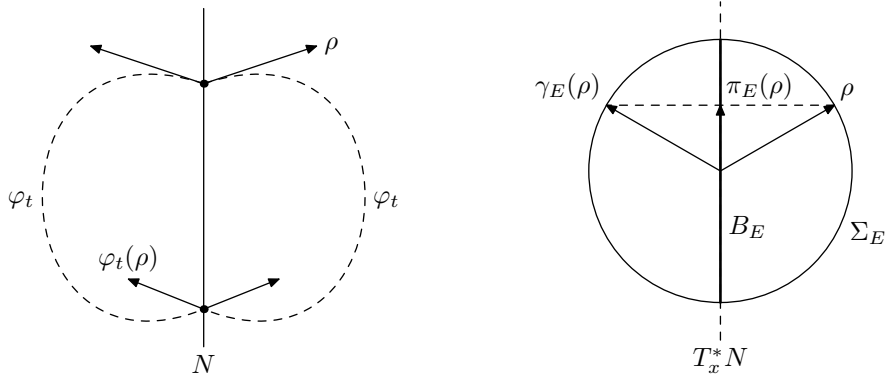

\includegraphics{qeres.1}
\qquad\qquad
\includegraphics{qeres.2}
\caption{Left: the situation prohibited almost everywhere
by the dynamical assumption~\eqref{eq:h-dynl}.
Right: The projection map $\pi_E$ and the reflection map $\gamma_E$
in the cotangent space over some point $x\in N$.}
\label{s:figure-1}
\end{figure}
%
%

Let $ ( M , g ) $ be a compact smooth Riemannian manifold
and consider 
\begin{equation}
\label{eq:Ph}   P ( h ) := -h^2 \Delta_g + V ( x ) , \ \ \ V \in C^\infty ( M;
\mathbb R
) . \end{equation}
The operator  $ P ( h ) $ is self-adjoint when acting 
on half-densities (see \cite[Chapter 9]{e-z}), $ L^2 ( M ,
\Omega_M^{1/2}) $. (This technical point is helpful when more
general operators are considered.)
The classical 
symbol of $ P ( h ) $ is given by 
\[ p ( x , \xi ) = | \xi|_g^2 +  V ( x ) ,  \ \ ( x , \xi ) \in T^* M
, \]
and $ p $ defines the Hamiltonian flow, 
\begin{equation}
\label{eq:varph}   \varphi_t := \exp ( t H_p ) \; : \; p^{-1} ( E ) 
\longrightarrow p^{-1} ( E ) , \ \ E \in \mathbb R, \end{equation}
 We make the following assumption on a range on energies:
\begin{equation}
\label{eq:ener}
\text{For $ E \in [ a, b ] $, $ dp |_{p^{-1} ( E ) } \neq
  0 $, and the flow $ \varphi_t : p^{-1} ( E ) \to p^{-1} ( E
  ) $ is ergodic,}
\end{equation}
where ergodicity is with respect to the Liouville measure 
$\mu_E$ on $ p^{-1} ( E ) $.

Now, let $ N $ be a smooth open hypersurface in $ M $. We
define 
\begin{equation} 
\label{eq:sigla} \Sigma_E :=  p^{-1} ( E ) \cap \pi^{-1}(  N) 
, \end{equation}
where $ \pi : T^* M \to M $ is the natural projection. 
We note that $ \Sigma_E $ is a smooth hypersurface 
in $ p^{-1} ( E ) $ if 
\begin{equation}
\label{eq:VE}
V ( x ) = E \implies  d V ( x ) \notin N^*_x N, 
\end{equation}
and  for simplicity we make this assumption for all $ E \in [ a, b ] $.
For $ E > 0 $ it  is satisfied when $ V \equiv 0 $,  and that is the 
setting of Theorem \ref{t-Lap}.

By restricting
elements of $ \Sigma_E $ to $ TN $ we obtain a map
\begin{equation}
\label{eq:piE}   \pi_E : \Sigma_E \to  B_E := \pi_E ( \Sigma_E ) \subset  T^* N , 
\end{equation}
which is a local diffeomorphism almost everywhere. It defines 
a unique nontrivial involution
\[   \gamma_E :
\Sigma_E \to \Sigma_E,  \ \ 
\pi_E \circ  \gamma_E  = \pi_E , \ \ \gamma_E
\circ \gamma_E = id \]
which 
is the reflection across the orthogonal complement
of the normal bundle $N^*N\subset T_N^*M$ with respect to the metric
on the fibers of $T^*M$ induced by $g$.
This involution enters into the dynamical assumption we make on $ N$
(see Fig.~\ref{s:figure-1}):
\begin{gather}
\label{eq:h-dynl}
\begin{gathered}
\text{For $ E \in [ a, b ] $, the set of $ \rho \in 
\Sigma_E $ satisfying } \varphi_t(\rho)\in\Sigma_E\\
\text{and }\varphi_t ( \gamma_E ( \rho ) ) = \gamma_E ( \varphi_t ( \rho )  ) 
\ \text{ for some $ t \neq 0 $, has measure $ 0 $.}
\end{gathered}
\end{gather}

We denote by $ u_j ( h ) $ a normalized eigenfunction of $ P  ( h ) $
with an eigenvalue $ E_j ( h ) $,  
\[ (P ( h ) - E_j ( h ))u_j(h) = 0 , \ \  \| u_j ( h ) \|_{L^2 ( M ,
  \Omega_M^{\frac12}) }  = 1 .\]

To formulate the quantum ergodicity theorem for restrictions, we need
to restrict half-densities to $ N $ and that requires making a
choice. Suppose $ f \in C^\infty ( M ) $, $ f|_N = 0 $, $ df|_N \neq 0
$.  Informally, the restriction is now defined using,
$  
| d x|^{\frac12} =  | dy |^{\frac 12} |d f|^{\frac12} $ , $ x \in M $, 
$  y \in N $.
More precisely if, in local coordinates, $ x = (  x', x_n ) $, 
$ N = \{ x_n = 0 \} $ then, in the half-density notation 
of \cite[\S 9.1]{e-z},
\begin{equation}
\label{eq:resth}  \big( u ( x ) | dx|^{\frac12} \big) |_N :=  u ( x', 0 )
|dx'|^{\frac12}  \left| \frac {\partial f}{ \partial x_n}  ( x', 0)
\right|^{-\frac 12} . 
\end{equation}

Using the notation of~\cite[Chapter 14]{e-z}, reviewed in
Section~\ref{pr} below, we can state our main result. {See
Appendix~\ref{s:appendix-a} for the modifications needed in the case
when $M$ has a boundary.}
\begin{theo}
\label{t-Sch}
Suppose that $( M , g ) $ is a compact Riemannian manifold with a
piecewise smooth boundary satisfying~\eqref{eq:VE2} and that $ u_j =
u_j ( h) $ are normalized eigenfuctions of the Dirichlet realization
of~$ P (h)$.

Suppose also that \eqref{eq:ener} holds and that $ N $ is a smooth
open hypersurface not intersecting $\partial M$ and
satisfying~\eqref{eq:h-dynl}. For $ Q \in \Psi^m_h ( M , \Omega_M^{1/2}
) $ put $ v_j := Q u_j ( h ) |_ N$, where the restriction operator on
half densities is defined in~\eqref{eq:resth}.  Then for $ A \in
\Psi^0_h( N , \Omega_N^{1/2} ) $, compactly supported in $ N $, we have
\begin{equation}
  \label{eq:qe1}
h^n \sum_{ E_j  \in [ a, b ] } 
\bigg|  \langle A v_j  , v_j  \rangle_{ L^2 ( N , \Omega^{1/2}_N )
  }  - {\int \!\! \! }_{ \Sigma_{E_j}  }
\pi_{E_j  } ^ *\sigma ( A )
 | \sigma ( Q ) |^2  \,d \nu_{E_j  } 
 \bigg| \longrightarrow 0 ,  \
\ \ h \to 0 ,  \end{equation}
where $ \sigma ( A ) \in S^0 ( T^*N ) $ is the symbol of $ A $, $
\sigma ( Q ) \in S^m ( T^* M ) $ is the symbol of $ Q $, and the
measure $\nu_E$ is defined on $\Sigma_E$ by the identity
\begin{equation}
\label{eq:mL}
{d\mu_E\over\mu_E(p^{-1}(E))}=d\nu_E\wedge df,
\end{equation}
with $ \mu_E $ the Liouville measure and $ f $ defining the restriction of half-densities in \eqref{eq:resth}.
\end{theo}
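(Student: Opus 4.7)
The plan is to translate $\langle Av_j,v_j\rangle_N$ into a matrix element on $M$ and apply standard semiclassical quantum ergodicity there~\cite{HMR}. Let $R:L^2(M,\Omega_M^{1/2})\to L^2(N,\Omega_N^{1/2})$ be the half-density restriction in~\eqref{eq:resth}. Since $v_j = RQu_j$,
\[
\langle Av_j,v_j\rangle_N = \langle Q^*R^*ARQ\, u_j, u_j\rangle_M.
\]
Choose a cutoff $\chi\in C_c^\infty(\mathbb{R})$ with $\chi\equiv 1$ on $[a,b]$; since $\chi(P(h))u_j = u_j$ for $E_j\in[a,b]$, the matrix element is unchanged after sandwiching by $\chi(P(h))$. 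A symbolic calculation (stationary phase in the cotangent variable normal to $N$) yields a decomposition
\[
\chi(P(h))\,Q^*R^*ARQ\,\chi(P(h)) = B + F \pmod{O(h^\infty)},
\]
where $B\in\Psi^0_h(M,\Omega_M^{1/2})$ corresponds to the diagonal sheet of the canonical relation of $R^*AR$, and $F$ is a compactly microlocalized semiclassical Fourier integral operator quantizing the reflection $\gamma_E$. The principal symbol of $B$ is normalized so that
\[
\mu_E(p^{-1}(E))^{-1}\!\!\int_{p^{-1}(E)}\!\sigma(B)\,d\mu_E = \int_{\Sigma_E}\pi_E^*\sigma(A)\,|\sigma(Q)|^2\,d\nu_E;
\]
indeed, the measure $\nu_E$ in~\eqref{eq:mL} is defined precisely so that this symbolic bookkeeping is clean.

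For the pseudodifferential contribution, the standard semiclassical QE theorem of~\cite{HMR} under~\eqref{eq:ener} gives
\[
h^n\!\!\sum_{E_j\in[a,b]}\Bigl|\langle Bu_j,u_j\rangle_M - \int_{\Sigma_{E_j}}\pi_{E_j}^*\sigma(A)\,|\sigma(Q)|^2\,d\nu_{E_j}\Bigr|\to 0,
\]
which is exactly the main term of~\eqref{eq:qe1}. What remains---and where I expect the main difficulty---is to show that $h^n\sum_{E_j\in[a,b]}|\langle Fu_j,u_j\rangle_M|\to 0$.

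Because $U_h(t)u_j = e^{-itE_j/h}u_j$, for any $T>0$ we have $\langle Fu_j,u_j\rangle = \langle F_T u_j,u_j\rangle$ with $F_T := T^{-1}\int_0^T U_h(-t)FU_h(t)\,dt$. Cauchy--Schwarz together with Weyl's bound $h^n\#\{j:E_j\in[a,b]\} = O(1)$ reduces the task to showing
\[
h^n\!\!\sum_{E_j\in[a,b]}\langle F_T^*F_T u_j,u_j\rangle = O(1/T) + o_h(1).
\]
Semiclassical Egorov, applied factor by factor inside $F_T^*F_T$, shows that the composition is pseudodifferential modulo lower order, with principal symbol at $\rho$ given by a $T^{-2}$-weighted double integral over $[0,T]^2$ of an indicator forcing the intertwining $\varphi_\tau\circ\gamma_E(\varphi_t(\rho)) = \gamma_E\circ\varphi_\tau(\varphi_t(\rho))$, with $\tau = s-t$. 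The dynamical hypothesis~\eqref{eq:h-dynl} says this identity fails for $\tau\neq 0$ off a $\mu_E$-null set, so the $L^1(p^{-1}(E))$-norm of $\sigma(F_T^*F_T)$ is $O(1/T)$. One more invocation of QE on $M$, applied to $F_T^*F_T$, closes the estimate; sending $h\to 0$ and then $T\to\infty$ concludes. The chief technical hurdle is controlling the FIO compositions and Egorov error terms sharply enough to read off the quantitative $O(1/T)$ bound from~\eqref{eq:h-dynl}.
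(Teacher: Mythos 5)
Your overall strategy (split the restriction matrix element into a pseudodifferential part handled by quantum ergodicity on $M$ plus a reflection FIO handled by time averaging and~\eqref{eq:h-dynl}) is the same as the paper's, but the step on which everything rests does not hold as stated. With $\chi$ supported in a \emph{fixed-size} interval, $\chi(P)\,Q^*R^*ARQ\,\chi(P)$ is \emph{not} $B+F$ modulo $O(h^\infty)$: its wavefront relation is the product-type set of pairs $(\rho,\rho')\in T^*_NM\times T^*_NM$ with the same projection to $T^*N$ but \emph{independent} normal momenta (hence independent energies in $\supp\chi$), which is $2n$-dimensional and is not the union of the diagonal and the graph of a canonical transformation; already for $N$ a point in one dimension the kernel $[\chi(P)\delta_0](x)\overline{[\chi(P)\delta_0](y)}$ shows the cross-energy correlations are not $O(h^\infty)$. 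Moreover $\gamma_E$ is only defined on the codimension-two set $\Sigma_E$, so ``an FIO quantizing $\gamma_E$'' is not yet meaningful. The paper's fix is precisely the point you skip: replace the sharp restriction by the time-smeared Cauchy-data operator $\mathcal Ru(t)=(e^{it(P-E)/h}u)|_N$, so that the extra variables $(t,\tau)$ turn the two sheets into genuine symplectomorphisms $\kappa_\ell$ (generating function~\eqref{eq:psip}) and the cross terms $B_k^*(\chi(t)\otimes\widetilde A)B_\ell$ into FIOs associated to the flow-extended reflection. A second structural omission is the glancing set $\partial B_E$: the stationary phase in the normal variable degenerates there and the diagonal symbol blows up like $|\partial_{\xi_n}p|^{-1}$, so QE on $M$ cannot be applied to your $B$ directly; the paper excises a small-measure neighborhood via the decomposition~\eqref{eq:AA} and controls it, together with the part of $\WFh(A)$ outside $B_E$, by the $L^2$-restriction bound~\eqref{e:hs-estimate-2} of Lemma~\ref{l:1} --- an ingredient absent from your argument, and needed anyway since $h^n\sum_{E_j\in[a,b]}\|v_j\|_{L^2(N)}^2$ is not trivially bounded.

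Your handling of the FIO part also has a gap in mechanism. $F_T^*F_T$ is \emph{not} pseudodifferential modulo lower order: the conjugated pieces $e^{-itP/h}F^*e^{i(t-s)P/h}Fe^{isP/h}$ with $|t-s|\geq t_0$ are FIOs whose canonical relations generically miss the diagonal, and their contribution is killed by the trace/wavefront argument of Lemma~\ref{l:2} ($O(h^\infty)$), not by a small symbol; in particular you cannot ``invoke QE on $M$ for $F_T^*F_T$'', and an indicator is not an admissible symbol. More importantly,~\eqref{eq:h-dynl} only gives \emph{measure zero} of the exceptional set, with no modulus of continuity, so no quantitative $O(1/T)$ bound can be read off directly: one must first cut out an open neighborhood of the exceptional (and flow-undefined) set of measure at most $1/T$ with an operator $X_T$, control $F X_T$ via~\eqref{e:hs-estimate} and the Weyl law, and only then get exact $O_T(h^\infty)$ decay for the remaining times --- this is the content of Lemma~\ref{l:fio-decorrelation}. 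Finally, since $M$ has a piecewise smooth boundary, the Helffer--Martinez--Robert theorem and global Egorov statements you quote do not apply as such; one needs the broken-flow versions (Theorem~\ref{t-qe}, Lemma~\ref{l:hans}, and the Weyl law~\eqref{e:weyl-law}), which is why the paper includes Appendix~\ref{s:appendix-a}.
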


\medskip\noindent
{\bf Remarks.}
(i) The measure $\nu_E$ has a particularly nice description in the
case $V=0$. Assume that $ E = 1 $, then $ \Sigma_1 = S_N^* M $ where $
S^*_N M \subset S^*M $ denotes the cosphere bundle of $M$ restricted
to $N$. The Liouville measure, $ \mu_1 $, on $ S^* M $, induces, for
each $ x \in M $, a measure on $ S_x^* M $, $ \mu_x $, such that
\[  \mu_1 ( \Omega ) = \int_M  \mu_x ( \Omega \cap S_x^* M ) \,d \vol_g (x) , \
\ \Omega \subset S^*M . \]
Our measure on $ \Sigma_1 = S^*_N M $ is then given by 
\begin{equation}
\label{eq:mN} 
\nu_1  ( \Gamma ) =  \frac{ 1 } { \mu_1 ( S^* M ) } \int_N \mu_x ( \Gamma \cap S^*_x M ) \,d \vol_{g |_N} ( x
) ,  \ \ \Gamma \subset S_N^* M , 
\end{equation}
where $ g|_N $ is the metric on $ N $ induced by $ g $. (Here $
\Omega $ and $ \Gamma $ are Borel sets.) See Appendix~\ref{s:appendix-b}
for details.

\medskip\noindent
(ii) The now standard argument due to Colin de Verdi\`ere and Zelditch
and described in~\cite[Theorem~15.5]{e-z} shows that this result
provides pointwise convergence for a density one subsequence: there
exists a family of sets, $\Lambda(h) \subset \{a \le E_j \le b\}$,
such that $\lim_{h\to 0 } {\# \Lambda(h)\over \#\{a \le E_j \le b\}} =
1$, and, in the notation of Theorem \ref{t-Sch},
\begin{equation}
\label{eq8.18}
\sup_{E_j\in\Lambda(h)}\bigg|\langle A v_j, v_j\rangle_{L^2 ( N, \Omega_N^{1/2} ) } 
- {\int \!\! \! }_{ \Sigma_{E_j}  }
\pi_{E_j  } ^ *\sigma ( A )
 | \sigma ( Q ) |^2  \,d \nu_{E_j  }\bigg|\longrightarrow 0 \,, \ \
 \mbox{as }  h \to 0 .
\end{equation}
In step~4 of the proof of~\cite[Theorem~15.5]{e-z}, to pass
from~\eqref{eq8.18} for a countable dense family $A_k$ of
pseudodifferential operators to the full statement, one needs uniform
boundedness of $v_j$ on $L^2(N)$.  While this need not be true for the
whole sequence, it holds for $E_j\in\Lambda(h)$ if we take one of the
operators $A_k$ to be the identity.

\medskip
\noindent
(iii) The dynamical condition of Toth--Zelditch~\cite{t-z} is stated
using Poincar\'e return times but the analysis in that paper shows
that it is equivalent to our condition~\eqref{eq:h-dynl}. The
paper~\cite{t-z} provides interesting examples for which it is
satisfied.

\medskip
\noindent
(iv) We sum over eigenvalues in a fixed size interval $[a,b]$ in~\eqref{eq:qe1}
since the corresponding smoothed out spectral projectors are pseudodifferential operators.
It would be interesting to prove an analogous statement for size $h$ intervals,
whose projectors have more complicated microlocal structure,
as in~\cite[Appendix~D]{d-g}.

\medskip

The paper is organized as follows. In Section~\ref{pr} we review basic
concepts of semiclassical quantization and present slightly
non-standard facts needed in the proof. The key point is that even in
the case of manifolds with boundary we only need to work with standard
pseudodifferential operators. In Section~\ref{dec} we present a
general decorrelation result and in Section~\ref{qer} the proof of
Theorem \ref{t-Sch}. Since no reference for quantum ergodicity for
semiclassical boundary value problems seems to be available,
Appendix~\ref{s:appendix-a} present a proof in the spirit
of~\cite{z-z} with some simplifications based
on~\cite[Appendix~D]{d-g} (see Lemma~\ref{l:1}). The high energy
result for Laplacians, as presented in~\cite{t-z}, follows from
Theorem~\ref{t-Sch} but there is an issue at energy $ 0 $, and
Appendix~\ref{s:appendix-b} shows how that is overcome.

Except for the 
efficiency provided by direct semiclassical methods \cite{e-z}, the
proofs are similar to those in~\cite{t-z}. The one significant
difference is the treatment of quantum ergodicity for microlocal
Cauchy data in Section~\ref{qer}~-- see~\cite{c-t-z} for
comparison. 

\smallskip\noindent{\sc Acknowledgements}.
We would like to thank Nicolas Burq, Oran Gannot and St\'ephane
Nonnenmacher for helpful comments on the first version of this paper,
Hart Smith for explaining an alternative proof of Lemma \ref{l:fPb}
based on finite speed of propagation, Steve Zelditch for
encouraging us to handle the boundary case, and an anonymous referee
for many helpful suggestions. The partial support by
National Science Foundation under the grant DMS-1201417 is also
gratefully acknowledged.

\section{Semiclassical preliminaries}
\label{pr}

We will use the calculus of semiclassical pseudodifferential operators
described in~\cite[\S 9.3, \S 14.2]{e-z}. Our operators will always be
supported away from the boundary of the manifold and hence can be
considered as operators on a boundaryless manifold {(for example, by
considering the double space of a neighborhood of our manifold in a
slightly bigger open manifold without boundary). A notable exception
is the Schr\"odinger operator $-h^2\Delta_g+V(x)$, which can however be
extended smoothly past the boundary.}

For a compact manifold, $ X$ (which could be different from the
compact manifold $ M$ considered in Section~\ref{int}), the class $
\Psi^m_h (X ) $ denotes operators of order $ m$, so that, for instance $
- h^2 \Delta_g \in \Psi^2_h ( M ) $. We have the symbol map, $ \sigma $,
appearing in the following exact sequence
\[
0 \longrightarrow h \Psi^{m-1}_h ( X )  \longrightarrow \Psi^{m}_h(X)
{\stackrel{ \sigma } {\longrightarrow } } \; S^m ( T^* X ) / h S^{m-1} ( T^*
X ) \longrightarrow 0  ,
\]
where $ S^m $ denotes the standard space of symbols. We take some
quantization map $ \Op_h: S^m ( T^* X ) \to \Psi^m_h $; it satisfies 
\[
\sigma( \Op_h ( a ) ) = a \mod h S^{m-1} ( T^* X ).
\]

We also introduce the class of {\em compactly microlocalized
pseudodifferential operators}, $ \Psi^{\comp}_h ( X ) $: $ A \in
\Psi^{-\infty } ( X ) $ is in $ \Psi^{\comp}_h ( X ) $ if for some $
\chi \in C^\infty_{\mathrm{c}} ( T^* X ) $,
\[
\Op_h (1 -  \chi ) A \in h^\infty \Psi^{-\infty } ( X ) .
\]

For this class the definition of $ \WFh ( A ) $ given in~\cite[\S
8.4]{e-z} applies. From the same section we take the definition of
microlocal equality of operators.

Following \cite[\S 2.3]{d}, \cite[\S 3]{s-z}, and~\cite[\S 11.2]{e-z}
we consider Fourier integral operators quantizing a canonical
transformation $ \kappa : U_1 \to U_2 $, $ U_1 \Subset T^*X $ and $
U_2 \Subset T^*Y $, $ \kappa $ defined on a neighbourhood of $ U_1 $:
we say that an operator $ F : L^2 ( X ) \to L^2 ( Y ) $, quantizes $
\kappa $ if for any $ A \in \Psi^{\comp}_h ( Y ) $ with $ \WFh ( A )
\Subset U_2 $,
\begin{equation}
\label{eq:Eg}     F^* A F = B , \ \ B \in \Psi^{\comp}_h ( X ) , \ \ 
\sigma ( B ) = \kappa^* \sigma ( A ) .
\end{equation}
We further require that $F$ be microlocally unitary in the sense that
$ F^{-1} = F^* $ microlocally near $ U_1 \times U_2 $. If $F$
quantizes $\kappa$, then the operator $ F^* $ quantizes $ \kappa^{-1}
$. More generally we say that $ G $ is a Fourier integral operator
associated to the canonical relation $\kappa $ if $ G = A F $, for $ F
$ above and some $ A \in \Psi^0_h ( X ) $.

The standard example is given by $ F( t ) = e^{- it P ( h )/h } $,
where $ P ( h ) = -h^2 \Delta_g + V ( x ) \in \Psi^2_h ( M ) $ (or a
more general operator) which quantizes the Hamiltonian flow $
\varphi_t := \exp(tH_p)$.

We say that a tempered operator (see \cite[\S 8.4]{e-z}) $ G: L^2 ( X
) \to L^2 (Y ) $, is {\em compactly microlocalized} if for some $ A
\in \Psi_h^{\comp } ( Y ) $ and $ B \in \Psi_h^{\comp } ( X ) $,
\begin{equation}
\label{eq:comi}    A G B - G \in h^\infty \Psi^{-\infty } . 
\end{equation}
In that case we can define $ \WFh' ( G ) \subset T^*Y \times T^*X$, by
taking the twisted $ \WFh $ of its Schwartz kernel, $ K_G $:
\begin{equation}
  \label{e:wf-h-operator}
 \WFh' ( G ) := \{ ( y , \eta; x , -\xi ) \; : \;   ( y , \eta ; x , \xi  )
\in \WFh ( K_G ) \}.
\end{equation}
If $F$ is associated to some canonical transformation $\kappa$, then
$\WFh'(F)$ lies inside the graph of~$\kappa$.

We recall from \cite[Theorem 14.9]{e-z} that if $ M $ has no boundary,
and $ P ( h ) = -h^2 \Delta + V ( x ) $, $ f \in C^\infty_{\mathrm c}
( \mathbb R ) $, then
\begin{equation}
\label{eq:fP}   f ( P ( h ) ) \in \Psi_h^{\comp } ( M ) , \ \ \sigma \big( f ( P ( h ) ) \big)
= f ( p ) .
\end{equation}

We now present three lemmas which will be used in the paper.  We
assume that the manifold $ M $ and operator $ P ( h ) $ are as in
Appendix~\ref{s:appendix-a}.

The first lemma makes an observation that $ f ( P ) $ is a nice
operator away from the boundary.
\begin{lem}
\label{l:fPb}
Suppose that $ f \in C^\infty_{\rm{c}} ( \RR ) $, $ \chi \in
C^\infty_{\rm{c}}  ( M^\circ ) $. Then 
\begin{equation}
\label{eq:fPb}   f ( P ( h ) ) \chi = A_f + {\mathcal
  O}_{L^2 ( M ) \to L^2 ( M ) } ( h^\infty ), \ \ 
\end{equation}
where $ A_f \in \Psi_h^{\comp } ( M^\circ ) $ {is compactly supported
away from the boundary} and its principal 
symbol is given by $ f ( p ( x , \xi ) ) \chi ( x ) $.
\end{lem}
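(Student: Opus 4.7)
The plan is to reduce to the boundaryless case \eqref{eq:fP}. Let $\tilde M$ be a smooth compact boundaryless manifold containing a neighborhood of $\supp\chi$ and let $\tilde P(h)$ be a self-adjoint extension of $P(h)$ to $\tilde M$, obtained by smoothly extending $g$ and $V$ past $\partial M$. Then \eqref{eq:fP} gives $f(\tilde P(h))\in\Psi^{\comp}_h(\tilde M)$ with principal symbol $f(\tilde p)$. Fixing a cutoff $\psi\in C^\infty_{\mathrm c}(M^\circ)$ with $\psi=1$ near $\supp\chi$, I would set $A_f:=\psi\,f(\tilde P(h))\,\chi$. This operator lies in $\Psi^{\comp}_h(M^\circ)$ and has principal symbol $\psi(x)f(\tilde p(x,\xi))\chi(x)=f(p(x,\xi))\chi(x)$, since $\psi=1$ on $\supp\chi$ and $p=\tilde p$ on $T^*M^\circ$. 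The complementary piece $(1-\psi)\,f(\tilde P(h))\,\chi$ is $\mathcal O_{L^2\to L^2}(h^\infty)$ because the Schwartz kernel of a compactly microlocalized pseudodifferential operator is $\mathcal O(h^\infty)$ off the diagonal and $\supp(1-\psi)\cap\supp\chi=\emptyset$.

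The central step is to establish
\[
f(P(h))\,\chi=f(\tilde P(h))\,\chi+\mathcal O_{L^2\to L^2}(h^\infty),
\]
which I would prove using finite speed of propagation for the Dirichlet wave equation, as in the approach credited to Hart Smith in the acknowledgments. After shifting spectra so that $P(h),\tilde P(h)\ge 1$ (replacing $V$ by $V+C$ and $f(\lambda)$ by $f(\lambda-C)$), write $f(\lambda)=g(\sqrt{\lambda})$ with $g$ an even function in $C^\infty_{\mathrm c}(\RR)$. Fourier inversion yields
\[
\bigl(f(P(h))-f(\tilde P(h))\bigr)\chi=\frac{1}{2\pi}\int_{\RR}\hat g(s)\bigl(\cos(s\sqrt{P(h)})-\cos(s\sqrt{\tilde P(h)})\bigr)\chi\,ds.
\]
The semiclassical wave operator $\partial_s^2+P(h)=\partial_s^2-h^2\Delta_g+V$ has principal symbol $\sigma^2-h^2|\xi|_g^2$, so finite speed of propagation bounds the wave speed by $h$. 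For $|s|<d_0/h$ with $d_0:=d(\supp\chi,\partial M)>0$, a solution with initial datum $\chi v$ stays in a compact subset of $M^\circ$, so the Dirichlet and free solutions coincide on $M$ and the two cosine propagators agree when applied to $\chi v$. For $|s|\ge d_0/h$, both cosine propagators are contractions on $L^2$, and the Schwartz decay of $\hat g$ gives $\int_{|s|\ge d_0/h}|\hat g(s)|\,ds=\mathcal O(h^\infty)$.

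The main technical obstacle is justifying finite speed of propagation uniformly in $h$ for the Dirichlet realization on a manifold with piecewise smooth boundary. For a smooth boundary this is a standard consequence of the energy identity applied to $(\partial_s^2-h^2\Delta_g+V)u=0$; in the piecewise smooth case the same identity applies on each smooth piece of $\partial M$, and since $\supp\chi\subset M^\circ$ has positive distance to all of $\partial M$ (including its corners and edges), the characteristic cone in the window $|s|<d_0/h$ never reaches the singular set. Once this is granted, Fourier inversion and the Schwartz tail estimate are routine.
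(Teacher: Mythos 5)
Your proposal is correct, but it is a genuinely different argument from the one in the paper: it is essentially the alternative proof ``based on finite speed of propagation'' that the acknowledgements attribute to Hart Smith. The paper instead works directly with the semiclassical Schr\"odinger propagator: it writes $f(P)\chi=\frac{1}{2\pi h}\int\hat f(t/h)e^{itP/h}\chi\,dt$, cuts the integral to $|t|\le\delta$ at an $\mathcal O(h^\infty)$ cost using the decay of $\hat f$ and unitarity, observes that for such short times the flow over $\supp\chi$ (within $\supp f\circ p$) stays a positive distance from $\partial M$, and then applies the interior WKB parametrix of \cite[\S 10.2]{e-z} and stationary phase in $(t,\tau)$, which produces the operator $A_f$ and its symbol $f(p)\chi$ in one stroke. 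Your route buys the symbol computation for free from \eqref{eq:fP} on the closed extension $\widetilde M$ and avoids any parametrix or stationary-phase work, at the price of the functional-calculus reduction ($P\ge 1$, $f(\lambda)=g(\sqrt\lambda)$ with $g$ even) and of justifying that the Dirichlet and extended cosine propagators agree on data supported in $\supp\chi$ for $|s|<d_0/h$. On that last point your phrasing is slightly off: if you prove support propagation for the \emph{Dirichlet} solution by energy estimates on backward cones from exterior points, those cones can perfectly well reach the corner set, so ``the characteristic cone never reaches the singular set'' is not the right justification. The clean fix is to argue in the opposite direction: the extended solution $\cos(s\sqrt{\widetilde P})\chi v$ stays supported in $M^\circ$ for $|s|<d_0/h$ by interior finite speed on the boundaryless $\widetilde M$ (no boundary regularity needed), hence its restriction to $M$ satisfies the Dirichlet boundary condition trivially and, by the standard uniqueness/energy-conservation argument for the abstract wave equation of the self-adjoint Dirichlet realization (first for smooth compactly supported data, then by $L^2$ density and boundedness of the propagators), coincides with $\cos(s\sqrt{P})\chi v$; alternatively one can invoke finite propagation speed for general Dirichlet forms. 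With that adjustment, and the minor bookkeeping that $f(\widetilde P)\chi v$ should be restricted to $M$ (harmless, since for $|s|<d_0/h$ it is supported in $M^\circ$ and the remaining times contribute $\mathcal O(h^\infty)$ by the Schwartz decay of $\hat g$), your proof is complete.
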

\begin{proof}
Let $ \psi_0 \in C^\infty_{\rm{c}} ( ( - \delta, \delta )  ) $ be equal to $ 1 $ 
near $ 0 $. We write $ f ( P ) \chi $ using the Schr\"odinger propagator, 
\[
f ( P ) \chi = \frac{ 1 } { 2 \pi h } \int_\RR \hat f ( t/h ) e^{ i
  t P / h } \chi  \,dt= 
\frac{ 1 } { 2 \pi h } \int_\RR \hat f ( t/h ) \psi_0 ( t )  e^{ i
  t P / h } \chi\,  dt +  {\mathcal
  O}_{L^2 ( M ) \to L^2 ( M ) } ( h^\infty ) ,
\]
where the error estimate follows from the decay of $ \hat f $ and the
untarity of $ e^{ it P /h }$.  We then write
\[
f ( P ) \chi = \frac{ 1 } { 2 \pi h } \int_\RR \int_\RR f (
\tau )  e^{ \frac {it} h ( 
 P  - \tau )  } \chi  \psi_0 ( t ) \, dt d \tau.
\]
Choose $ \delta $ sufficiently small so that for $|t|\leq\delta$, $p (
x, \xi ) \in \supp f $, and $ x \in \supp \chi $, we have $ d_g ( \pi
( \varphi_t ( x, \xi)) , \partial M ) >\epsilon > 0 $.  In that case
we can use the local parametrix for $ e^{ i t P / h } $, see for
instance \cite[\S 10.2]{e-z}. An application of the stationary phase
method in $ ( t, \tau ) $ variables gives the conclusion of the lemma.
\end{proof}

As before, let $(u_j(h))_{j\in \mathbb N}$ be the full orthonormal
system of eigenfunctions of $P(h)$ with eigenvalues $E_j(h)$. Lemma
\ref{l:fPb} applied to the operator $ f ( P ( h ) ) A $, where $ A \in
\Psi_h^m (M)$ is supported away from the boundary, and $f\in C_{\mathrm
c}^\infty(\mathbb R)$, gives, together with the trace
formula~\cite[Theorem~14.10]{e-z}
\begin{equation}
  \label{e:trace-xp}
(2\pi h)^n\sum_j f(E_j)\langle Au_j,u_j\rangle
=\int_{T^*M} f(p)\sigma(A)\, d \mu_\sigma  +\mathcal O(h), 
\end{equation}
where $ \mu_\sigma $ is the symplectic measure, $\mu_\sigma = {
\sigma^n } /{ n!}  $.

The second lemma, in the spirit of \cite[Appendix~D]{d-g}, gives
estimates using $ L^2 $ norms of symbols:

%
%
\begin{lem}
\label{l:1}
Suppose that $ A\in\Psi_h^m(M^\circ) $ is a pseudodifferential operator compactly supported away 
from $ \partial M $. 
Then for each $a'<a<b<b'$,
\begin{equation}
  \label{e:hs-estimate}
(2\pi h)^n\sum_{E_j\in [a,b]}\|Au_j\|_{L^2}^2\leq \|\sigma(A)\|^2_{L^2(p^{-1}([a',b']))}+\mathcal O(h)
\end{equation}
where the $L^2$ norm of $ \sigma ( A ) $ is taken with respect to the
measure $ \mu_\sigma $.

More generally, if $N\subset M$ is a fixed smooth submanifold (of any
dimension) not intersecting $\partial M$, then there exists a constant
$C$ such that for each $\widetilde A\in\Psi_h^m(N)$ supported in a fixed
compact subset of $N$,
\begin{equation}
  \label{e:hs-estimate-2}
h^n\sum_{E_j\in [a,b]}\|\widetilde A(u_j|_N)\|_{L^2}^2\leq 
C\|\sigma(\widetilde A)\|^2_{L^2(\pi(p^{-1}([a',b'])\cap T^*_NM))}+\mathcal O(h).
\end{equation}
Here $T^*_NM$ is the cotangent bundle of $M$ restricted to $N$ and
$\pi:T^*_NM\to T^*N$ is the projection. 
\end{lem}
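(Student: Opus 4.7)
The plan is to reduce each sum to the trace of a pseudodifferential operator and then apply the semiclassical trace formula \eqref{e:trace-xp}. First I would fix $g\in C_c^\infty((a',b'))$ with $0\le g\le \mathbf 1_{[a',b']}$ and $g\equiv 1$ on $[a,b]$; since $g(P)u_j=g(E_j)u_j$ this gives
\[
\sum_{E_j\in[a,b]} \|Au_j\|^2 \le \sum_j g(E_j)\langle A^*Au_j,u_j\rangle.
\]
Because $A^*A\in\Psi_h^{2m}(M^\circ)$ is compactly supported away from $\partial M$ with principal symbol $|\sigma(A)|^2$, applying \eqref{e:trace-xp} with test function $g$ and operator $A^*A$ would yield
\[
(2\pi h)^n\!\!\sum_{E_j\in[a,b]}\|Au_j\|^2 \le \int g(p)|\sigma(A)|^2\,d\mu_\sigma+\mathcal O(h)\le \|\sigma(A)\|^2_{L^2(p^{-1}([a',b']))}+\mathcal O(h),
\]
which is \eqref{e:hs-estimate}.

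For \eqref{e:hs-estimate-2} I would use the same $g$ and $f:=\sqrt g\in C_c^\infty$. Denoting by $R_N:L^2(M)\to\mathcal D'(N)$ the restriction map, the identity $f(P)u_j=f(E_j)u_j$ and the bound $f\ge\mathbf 1_{[a,b]}$ allow me to write
\[
\sum_{E_j\in[a,b]}\|\widetilde A(u_j|_N)\|^2\le\sum_j\|\widetilde A R_N f(P)u_j\|^2=\Tr\bigl(\widetilde A^*\widetilde A\cdot R_N g(P)R_N^*\bigr),
\]
using Parseval in the $u_j$-basis together with cyclicity of the trace. Everything then reduces to analyzing $R_N g(P) R_N^*$ on $L^2(N)$.

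The key structural claim, which I would justify via the oscillatory-integral representation of $g(P)$ furnished by Lemma \ref{l:fPb}, is that $h\cdot R_N g(P) R_N^*$ belongs to $\Psi_h^{\mathrm{comp}}(N)$ with principal symbol $\tfrac1{2\pi}\int g\bigl(p(y',0,\eta',\xi_n)\bigr)\,d\xi_n$. In adapted local coordinates $x=(x',x_n)$ with $N=\{x_n=0\}$, the kernel of $g(P)$ has the form $(2\pi h)^{-n}\!\int e^{i(x-y)\cdot\xi/h}a(x,y,\xi,h)\,d\xi$ with $a\sim g(p(x,\xi))$; setting $x_n=y_n=0$ and integrating out $\xi_n$ produces the stated symbol, the factor $(2\pi h)^{-1}$ from the $\xi_n$ integration being responsible for the $h^{-1}$ loss. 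Applying the trace formula on $N$ to $h\widetilde A^*\widetilde A\cdot R_N g(P) R_N^*\in\Psi_h^{\mathrm{comp}}(N)$ would then give
\[
h^n\Tr\bigl(\widetilde A^*\widetilde A\cdot R_N g(P)R_N^*\bigr)=\tfrac1{(2\pi)^n}\int_{T^*_NM} g(p)\,|\sigma(\widetilde A)\circ\pi|^2\,dy'd\xi'd\xi_n+\mathcal O(h),
\]
and because the factor $\int g(p(y',0,\eta',\xi_n))\,d\xi_n$ is uniformly bounded in $(y',\eta')$ and vanishes off $\pi(p^{-1}([a',b'])\cap T^*_NM)$, the right side is dominated by $C\,\|\sigma(\widetilde A)\|^2_{L^2(\pi(p^{-1}([a',b'])\cap T^*_NM))}$, proving \eqref{e:hs-estimate-2}.

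The main obstacle will be justifying the pseudodifferential structure of $R_N g(P) R_N^*$ with the correct $h^{-1}$ scaling: one needs to carry out the restriction-and-elimination of the normal fiber variable in the oscillatory integral carefully, confirming membership in the appropriate (shifted) symbol class and verifying the principal symbol formula. This can equivalently be phrased using the FIO calculus for the conormal distribution kernel of $R_N^*$. Everything else is routine bookkeeping for the semiclassical trace formula in the presence of a single non-semiclassical fiber integration.
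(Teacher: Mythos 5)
Your proof of \eqref{e:hs-estimate} is exactly the paper's argument (monotone cutoff $g$, the operator $A^*A$, and the trace formula \eqref{e:trace-xp}), and your reduction of \eqref{e:hs-estimate-2} to $\sum_j\|\widetilde A R_N f(P)u_j\|^2$ coincides with the paper's first step; where you diverge is in how this quantity is evaluated. The paper treats it as the Hilbert--Schmidt norm $\|\widetilde A R_N f(P)\|_{\HS}^2$ and estimates it directly: it writes the Schwartz kernel of $\widetilde A R_N f(P)$ as an explicit oscillatory integral (using Lemma~\ref{l:fPb} for $f(P)$ away from $\partial M$), applies Plancherel in the $y$-variable and a single stationary phase, and bounds the resulting $L^2$ norm -- it never needs to identify $R_Ng(P)R_N^*$ as an operator in a calculus. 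You instead pass to $\Tr\bigl(\widetilde A^*\widetilde A\,R_Ng(P)R_N^*\bigr)$ and prove the structural statement that, after multiplying by the appropriate power of $h$, $R_Ng(P)R_N^*$ is a semiclassical pseudodifferential operator on $N$ with principal symbol the fiber integral of $g(p)$ over the normal frequencies, then invoke the trace formula on $N$. This is correct (the fiber integral is uniformly bounded and supported in $\pi(p^{-1}([a',b'])\cap T^*_NM)$, which gives the uniform constant $C$), and it buys a genuine two-sided asymptotic for the restricted trace rather than just the upper bound the paper needs; the underlying computation -- restricting the kernel of $f(P)$ to $N$ in both variables and integrating out the normal frequency by stationary phase -- is the same in both arguments, so neither route is shorter in substance, but the paper's version is slightly more economical since an $L^2$ bound on the kernel suffices. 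Two small points to tighten: the lemma allows $N$ of any codimension $n-k$, so your normalization should be $h^{\,n-k}R_Ng(P)R_N^*\in\Psi_h^{\comp}(N)$ with symbol $(2\pi)^{-(n-k)}\int g\bigl(p(y',0,\eta',\xi'')\bigr)\,d\xi''$, $\xi''\in\RR^{n-k}$ (your power counting $h^n\cdot h^{-(n-k)}\cdot(2\pi h)^{-k}$ still closes); and since the remainder in Lemma~\ref{l:fPb} is only $\mathcal O(h^\infty)_{L^2\to L^2}$ while $R_N$ is not $L^2$-bounded, you should note (as the paper implicitly does) that composing with an extra energy cutoff and interior elliptic estimates upgrades the error to one that survives restriction to $N$ with an $\mathcal O(h^\infty)$ bound.
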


\noindent
{\bf Remark.} We note that in the case when  $ \widetilde A =
1 $ we recover the bound
\begin{equation}
\label{e:hs-estimate-3} h^n\sum_{E_j\in [a,b]}\|u_j|_N \|_{L^2}^2\leq 
C \,.\end{equation}
By contrast, for individual eigenfuctions
the bound $ C h^{ \frac{n-k}2 } $ is optimal -- see \cite{b-g-t}
and \cite{ta}.
\begin{proof}
To show~\eqref{e:hs-estimate}, take $f\in C_{\mathrm c}^\infty(a',b')$
such that $0\leq f\leq 1$ everywhere and $f=1$ on $[a,b]$. Then we write
by~\eqref{e:trace-xp},
$$
\begin{gathered}
(2\pi h)^n\sum_{E_j\in [a,b]}\|Au_j\|_{L^2}^2
\leq (2\pi h)^n\sum_j f(E_j)\langle A^*Au_j,u_j\rangle\\
=\int_{T^*M}f(p)|\sigma(A)|^2\,d \mu_\sigma+\mathcal O(h)
\leq \int_{p^{-1}([a',b'])} |\sigma(A)|^2\, d\mu_\sigma +\mathcal O(h).
\end{gathered}
$$
To show~\eqref{e:hs-estimate-2}, denote by $R_N:C^\infty(M)\to C^\infty(N)$
the restriction operator and note that
\[ \begin{split} 
h^n\sum_{E_j\in [a,b]}\|\widetilde A(u_j|_N)\|_{L^2}^2
& \leq h^n\sum_j |f(E_j)|^2\|\widetilde AR_N u_j\|_{L^2}^2 
 = h^n \sum_j \| \widetilde AR_N f ( P ) u_j\|_{L^2}^2 \\
& =h^n\|\widetilde A R_N f(P)\|_{\HS}^2.
\end{split} \]
The Hilbert--Schmidt norm on the right-hand side is equal to the $L^2$
norm of the Schwartz kernel $K$ of $\widetilde A R_N f(P)$. Recall
that $f(P)$ is a pseudodifferential operator when localized away from
the boundary, by Lemma~\ref{l:fPb}.  Note that $K=\mathcal O
(h^\infty) $ away from the diagonal of $N$ embedded in $N\times M$.
To estimate $K$ near the diagonal, we choose local coordinates
$x=(x',x'')$, $x'\in \mathbb R^k$, $x''\in
\mathbb R^{n-k}$, where $k=\dim N$, on $M$ near some point of $N$, in
which $N$ is given by $\{x''=0\}$.  If $\tilde a$ is the
full symbol of $\widetilde A$ in these coordinates (in the standard
quantization) and $\tilde b$ is the full symbol of the
pseudodifferential operator $f(P(h))$, then we can write
$$
K(x,y)=(2\pi h)^{-n-k}\int e^{{\frac i  h}(x\cdot\eta-z\cdot\eta+z\cdot\xi'-y\cdot\xi)}
\tilde a(x,\eta)\tilde b(z,0,\xi)\,dzd\eta d\xi,
$$
here $y,\xi\in \mathbb R^n$ and $x,z,\eta\in \mathbb R^k$. By the
unitarity of the (semiclassical) Fourier transform, the $L^2_{x,y}$
norm of $K(x,y)$ is equal to the $L^2_{x,\xi}$ norm of
$$
K_1(x,\xi)=(2\pi h)^{-n/2-k}\int e^{{\frac i h}(x\cdot\eta-z\cdot\eta+z\cdot\xi')}
\tilde a(x,\eta)\tilde b(z,0,\xi)\,dz d\eta.
$$
The method of stationary phase shows that
$$
K_1(x,\xi)=(2\pi h)^{-n/2}e^{{\frac i  h}x\cdot \xi'}(\tilde a(x,\xi')\tilde b(x,0,\xi)+\mathcal  O _{C^\infty} (h)).
$$
Now, $h^{n/2}$ times the $L^2$ norm of $K_1$ is bounded by a constant
times the $L^2$ norm of $\tilde a$ on the set $\pi(\supp \tilde b\cap
T^*_NM)$, with an $\mathcal O(h)$ remainder.
\end{proof}
%
%
To formulate the next lemma we define 
\[ \Diag(T^*M) :=  \{ ( \rho, \rho ) : \rho \in T^* M \} \subset T^*
M \times T^*M . \]

\begin{lem}
\label{l:2}
Suppose that $G : L^2 ( M ) \to L^2 ( M ) $ is a compactly
microlocalized tempered operator in the sense of \eqref{eq:comi},
compactly supported away from the boundary, and that $f\in C_{\mathrm
c}^\infty(\mathbb R)$.  Then for $ G $ satisfying
\[
\WFh'(G)\cap\Diag(T^*M)=\emptyset ,
\]
we have
\begin{equation}
  \label{e:no-trace}
\sum_j f(E_j)\langle G u_j,u_j\rangle=\mathcal O(h^\infty).
\end{equation}
\end{lem}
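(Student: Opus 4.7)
The plan is to interpret the sum as $\Tr(Gf(P))$ and reduce, via a microlocal partition combined with cyclicity of the trace, to a situation where two factors have disjoint wavefront sets and therefore multiply to something negligible.

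First I would observe that $\sum_j f(E_j)\langle Gu_j,u_j\rangle = \Tr(Gf(P))$, which converges absolutely since $f(P)$ is trace class with $\|f(P)\|_{\Tr}=\mathcal O(h^{-n})$ and $G$ is tempered (so bounded by $\mathcal O(h^{-N})$ for some $N$). The goal is then to show $\Tr(Gf(P))=\mathcal O(h^\infty)$. Since $\WFh'(G)$ is a compact subset of $T^*M\times T^*M$ disjoint from $\Diag(T^*M)$, every $(\rho_0,\rho_1)\in\WFh'(G)$ satisfies $\rho_0\neq\rho_1$, so I can enclose each such point in a product of disjoint open sets $U\ni\rho_0$, $V\ni\rho_1$ with $\overline U\cap\overline V=\emptyset$. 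Extracting a finite subcover and using a microlocal partition of unity, I decompose
\[
G=\sum_j G_j + \mathcal O(h^\infty)_{L^2\to L^2}, \qquad \WFh'(G_j)\subset U_j\times V_j,\ \overline U_j\cap\overline V_j=\emptyset,
\]
where each $G_j$ remains tempered and microlocally supported away from $\partial M$.

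Next I pick $A_j,B_j\in\Psi^{\comp}_h(M^\circ)$, supported away from $\partial M$, with $\WFh(A_j)$, $\WFh(B_j)$ contained in slight enlargements $U_j'$, $V_j'$ still disjoint, and principal symbols equal to $1$ on $U_j$, $V_j$ respectively, so that
\[
A_j G_j B_j = G_j + \mathcal O(h^\infty)_{L^2\to L^2}.
\]
Using cyclicity of the trace, together with the bound $\|f(P)\|_{\Tr}=\mathcal O(h^{-n})$ to absorb the error, I obtain
\[
\Tr(G_j f(P)) = \Tr\bigl(G_j\cdot B_j f(P) A_j\bigr) + \mathcal O(h^\infty).
\]
Now Lemma~\ref{l:fPb} implies that $f(P)$ acts as a pseudodifferential operator on the supports of $A_j$ and $B_j$, so $B_j f(P) A_j$ is a compactly microlocalized pseudodifferential operator with wavefront set contained in $U_j'\cap V_j'=\emptyset$. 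Consequently $B_j f(P) A_j\in h^\infty\Psi^{-\infty}$, its trace norm is $\mathcal O(h^\infty)$, and combined with the tempered bound on $G_j$ this gives $\Tr(G_j f(P))=\mathcal O(h^\infty)$. Summing over the finitely many $j$ finishes the proof.

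The one delicate point, and the step I expect to require the most care, is the microlocal partition: the diagonal-avoidance hypothesis $\WFh'(G)\cap\Diag(T^*M)=\emptyset$ is \emph{pointwise} in $T^*M\times T^*M$ and does not say that the left and right projections of $\WFh'(G)$ are disjoint in $T^*M$. So one cannot simply write $G=AGB$ globally with $\WFh(A)\cap\WFh(B)=\emptyset$; the partition in $T^*M\times T^*M$ is essential to reduce to pieces where this disjointness does hold, after which the cyclicity trick and Lemma~\ref{l:fPb} close the argument routinely.
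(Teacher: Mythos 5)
Your proposal is correct and follows essentially the same route as the paper: interpret the sum as $\Tr(Gf(P))$, use the diagonal-avoidance of $\WFh'(G)$ to decompose $G$ (up to $\mathcal O(h^\infty)$) into pieces sandwiched between pseudodifferential cutoffs with disjoint wavefront sets, then apply cyclicity of the trace and Lemma~\ref{l:fPb} to conclude that each $B_jf(P)A_j\in h^\infty\Psi^{-\infty}$. The "delicate point" you flag is exactly what the paper's terse phrase "we can write $G$ as a finite sum of operators $X_1GX_2$ with $\WFh(X_1)\cap\WFh(X_2)=\emptyset$" is implicitly handling, so your more explicit partition in $T^*M\times T^*M$ is just a spelled-out version of the same argument.
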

\begin{proof}
The left-hand side of~\eqref{e:no-trace} is equal to the trace of
$Gf(P(h))$.  We can write $G$ as a finite sum of operators of the form
$X_1GX_2$, where $X_1,X_2\in\Psi_h^{\comp}$ satisfy
$\WFh(X_1)\cap\WFh(X_2)=\emptyset$. Then by the cyclicity of the
trace,
$$
\Tr(X_1GX_2f(P))=\Tr(X_2f(P)X_1G)=\mathcal O(h^\infty),
$$
as $X_2f(P)X_1 \in h^\infty\Psi^{-\infty}$, by Lemma~\ref{l:fPb}.
\end{proof}

\section{Decorrelation for Fourier integral operators}
\label{dec}

In the proof of Theorem~\ref{t-Sch} we will encounter expressions
involving $ \langle F u_j , u_j\rangle$, where $(u_j(h))_{j\in \mathbb
N}$ is the full orthonormal system of eigenfunctions of
$P(h)=-h^2\Delta_g+V(x)$ with eigenvalues $E_j(h)$, and $ F $ is a
compactly microlocalized semiclassical Fourier integral operator. This
section shows that the sum of such terms over $j$ in an $\mathcal
O(1)$ sized spectral window is negligible when the canonical relation
of $ F $ satisfies a `nonreturning' assumption; we call this
phenomenon \emph{decorrelation} for Fourier integral operators.

Assume $F$ is a compactly microlocalized tempered operator $L^2(M)\to L^2(M)$,
in the sense of~\eqref{eq:comi}, and
\begin{equation}
  \label{e:f-assumption}
\|F\|_{L^2\to L^2}=\mathcal O(1),\
\WFh'(F)\subset \{(\kappa(\rho),\rho): \rho\in K_1\},
\end{equation}
where the wavefront set is defined in~\eqref{e:wf-h-operator},
$\kappa:V_1\to V_2$ is a canonical transformation, $V_1,V_2\subset
T^*M$ are open sets, and $K_j\subset V_j$ are compact sets such that
$\kappa(K_1)=K_2$. (In our case, $F$ will be a Fourier integral
operator, but this is not required in the proof.)

For each $t\in \mathbb R$, define the $t$-exceptional set, 
\begin{equation}
\label{eq:exc}
\mathcal E_\kappa(t):= \{ \rho \in K_1\cap \varphi_{-t} (K_1)  :   \varphi_t(\kappa(\rho))
=\kappa(\varphi_t ( \rho ))\}, \ \ \varphi_t := \exp ( t H_p )  .
\end{equation}
Here $\varphi_t$ is the broken Hamiltonian flow of $p$, defined almost everywhere
and described in Appendix~\ref{s:appendix-a}.

The decorrelation result is given as follows:
%
%
\begin{lem}\label{l:fio-decorrelation}
Suppose that $ a < b $ are fixed, and that 
there exists $t_0>0$ such that
\begin{equation}
\label{eq:nonret}
\mu_\sigma\Big(p^{-1}([a,b])\cap \bigcup_{|t|\geq t_0}\mathcal
E_\kappa(t)\Big)=0, 
\end{equation}
where $ {\mathcal E}_\kappa ( t ) $ is
given by \eqref{eq:exc} and~$\mu_\sigma$ is the symplectic measure.

Then for each $F$ compactly supported inside $M^\circ$ and satisfying~\eqref{e:f-assumption},
\begin{equation}
\label{eq:prop}
h^n\sum_{E_j\in [a,b]}|\langle Fu_j,u_j\rangle|\to 0\text{ as }h\to 0.
\end{equation}
\end{lem}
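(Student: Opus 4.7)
The plan is a time-averaging trick combined with Cauchy--Schwarz that converts the $L^1$-sum into a trace controllable via Lemmas~\ref{l:1} and~\ref{l:2}.

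By Cauchy--Schwarz and Weyl's law ($h^n\#\{E_j\in[a,b]\}=O(1)$), it suffices to show $h^n\sum_{E_j\in[a,b]}|\langle Fu_j,u_j\rangle|^2\to 0$. Fix $T>1$, set $U(t):=e^{-itP/h}$ and $F_T:=T^{-1}\int_0^T U(-t)FU(t)\,dt$. Since $Pu_j=E_ju_j$, we have $\langle F_Tu_j,u_j\rangle=\langle Fu_j,u_j\rangle$, so $|\langle Fu_j,u_j\rangle|^2\leq\|F_Tu_j\|^2$. Choose $f\in C_{\mathrm c}^\infty(\mathbb R)$ with $f\equiv 1$ on $[a,b]$; then the $L^2$-sum is bounded by $h^n\Tr(F_T^*F_Tf(P)^2)$, and expanding $F_T^*F_T$ and using cyclicity of the trace with the substitution $\tau=s-t$ yields
\[
h^n\Tr(F_T^*F_Tf(P)^2)=\frac{h^n}{T^2}\int_{-T}^{T}(T-|\tau|)\Tr(G_\tau f(P)^2)\,d\tau,\qquad G_\tau:=F^*U(\tau)FU(-\tau).
\]

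The operator $G_\tau$ is compactly microlocalized with canonical relation $\{(\Phi_\tau(\rho),\rho)\}$, where $\Phi_\tau:=\kappa^{-1}\varphi_\tau\kappa\varphi_{-\tau}$; its intersection with $\Diag(T^*M)$ is the fixed-point set of $\Phi_\tau$, namely $\varphi_\tau(\mathcal E_\kappa(\tau))$. A direct check from~\eqref{eq:exc} yields the identity $\varphi_\tau(\mathcal E_\kappa(\tau))=\mathcal E_\kappa(-\tau)$, so by~\eqref{eq:nonret} the union $\bigcup_{|\tau|\geq t_0}\varphi_\tau(\mathcal E_\kappa(\tau))\cap p^{-1}([a,b])$ has $\mu_\sigma$-measure zero. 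Split the $\tau$-integral accordingly. For $|\tau|<t_0$, the crude bound $|\Tr(G_\tau f(P)^2)|\leq\|F\|^2\|f(P)^2\|_{\Tr}=O(h^{-n})$ contributes $O(t_0/T)$. For $|\tau|\geq t_0$, fix $\epsilon>0$, choose an open neighborhood $U_\epsilon$ of the above measure-zero union with $\mu_\sigma(U_\epsilon\cap p^{-1}([a',b']))<\epsilon$, and $X_\epsilon\in\Psi^0_h$ with principal symbol $\chi_\epsilon$ supported in $U_\epsilon$ and equal to $1$ near the union; decompose $G_\tau=G_\tau(1-X_\epsilon)+G_\tau X_\epsilon$.

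The wavefront set of $G_\tau(1-X_\epsilon)f(P)^2$ is disjoint from $\Diag(T^*M)$, and by continuity of $\Phi_\tau$ and compactness of the parameter range $[t_0,T]$ this holds uniformly in $\tau$; Lemma~\ref{l:2} then gives a uniform $O(h^\infty)$ trace bound. For the $X_\epsilon$ part, Cauchy--Schwarz gives $|\langle G_\tau X_\epsilon u_j,u_j\rangle|\leq\|F\|^2\|X_\epsilon u_j\|$, so summing and applying the trace formula~\eqref{e:trace-xp} to $X_\epsilon^*X_\epsilon$ delivers $h^n|\Tr(G_\tau X_\epsilon f(P)^2)|\leq C\sqrt{\epsilon}+o(1)$ as $h\to 0$, uniformly in~$\tau$. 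Assembling these pieces gives
\[
h^n\sum_{E_j\in[a,b]}|\langle Fu_j,u_j\rangle|^2\leq C\!\left({t_0\over T}+\sqrt{\epsilon}\right)+o_{T,\epsilon}(1)\quad\text{as }h\to 0;
\]
letting $h\to 0$, then $\epsilon\to 0$, then $T\to\infty$ finishes the proof. The principal subtlety is that one must choose a \emph{single} microlocal cutoff $X_\epsilon$ that simultaneously absorbs the fixed-point sets of $\Phi_\tau$ for all $|\tau|\geq t_0$, which is precisely what the identity $\varphi_\tau(\mathcal E_\kappa(\tau))=\mathcal E_\kappa(-\tau)$ combined with the measure-zero hypothesis~\eqref{eq:nonret} makes possible.
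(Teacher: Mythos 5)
Your overall strategy is the same as the paper's: reduce to an $L^2$-sum by Cauchy--Schwarz and the Weyl law, time-average, split the $(t,s)$ (here $\tau$) integration into $|\tau|<t_0$ and $|\tau|\geq t_0$, excise a small-measure microlocal neighborhood of the exceptional set using Lemma~\ref{l:1}/\eqref{e:trace-xp}, and kill the remaining far-time trace by Lemma~\ref{l:2}. The only structural difference is the order of operations: you average the full $F$ and insert the cutoff $X_\epsilon$ inside the trace after conjugation, whereas the paper first forms $F_1=F(1-X_T)$ and averages that. That reordering is harmless in itself, but it exposes the one genuine gap in your argument.

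The lemma is stated in the setting of Appendix~\ref{s:appendix-a}: $M$ has a piecewise smooth boundary, $\varphi_t$ is the \emph{broken} Hamiltonian flow, defined only off the set $\mathcal B_T$ of~\eqref{e:B-T}, and the only microlocal description of the propagator available is Lemma~\ref{l:hans}, which applies to $\chi e^{-itP/h}A$ \emph{only when} $\WFh(A)$ avoids $\mathcal B_T$. Your key assertion that $G_\tau=F^*U(\tau)FU(-\tau)$ is compactly microlocalized with canonical relation $\{(\Phi_\tau(\rho),\rho)\}$, $\Phi_\tau=\kappa^{-1}\varphi_\tau\kappa\varphi_{-\tau}$, is therefore unjustified (and not known to hold) on the part of $K_1$ whose trajectories reach glancing covectors or corners of $\partial M$ within time $T$; your cutoff $X_\epsilon$ removes only the fixed-point sets $\varphi_\tau(\mathcal E_\kappa(\tau))=\mathcal E_\kappa(-\tau)$ (that identity is correct), not this bad set, and since $1-X_\epsilon$ sits to the right of $U(-\tau)$ it does not restore the hypotheses of Lemma~\ref{l:hans}. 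The closedness/compactness needed to choose a single small-measure neighborhood and a cutoff equal to $1$ near the union over $t_0\leq|\tau|\leq T$ also relies on absorbing $\mathcal B_T$. This is exactly why the paper's $\widetilde K_T$ in~\eqref{eq:defKT} contains $(\mathcal B_T\cup\kappa^{-1}(\mathcal B_T))\cap K_1$ in addition to the sets $\mathcal E_\kappa(t)$, and why the cutoff is inserted \emph{before} conjugating by the propagator, so that the wavefront bound~\eqref{eq:rhorho} follows from Lemma~\ref{l:hans}. The fix is straightforward --- enlarge your excised set by (a neighborhood of) $\mathcal B_T\cup\kappa^{-1}(\mathcal B_T)$, which has measure zero by \cite[Lemma~1]{z-z}, and place the cutoff so that Lemma~\ref{l:hans} applies --- but as written the step ``$\WFh'$ of $G_\tau(1-X_\epsilon)$ avoids $\Diag(T^*M)$'' does not follow. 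A smaller point: Lemma~\ref{l:2} requires the operator to be compactly supported away from $\partial M$; since the rightmost factor of your $G_\tau$ is $U(-\tau)$, you need to insert a spatial cutoff $\chi\in C_c^\infty(M^\circ)$ by cyclicity of the trace, as the paper does, before invoking it. On a boundaryless manifold your argument would go through essentially as written.
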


\begin{proof}
Take $T>t_0$ and denote
\begin{equation}
\label{eq:defKT}
\widetilde K_T:=\big((\mathcal B_T\cup \kappa^{-1}(\mathcal B_T))\cap K_1\big)
\cup \bigcup_{t_0\leq |t|\leq T}\mathcal E_\kappa(t).
\end{equation}
Here $\mathcal B_T$ is the set of points at which the broken
Hamiltonian flow is not well-defined on the interval $[-T,T]$,
see~\eqref{e:B-T}.  Then $\widetilde K_T$ is a compact subset of $U_1$
and $\mu_\sigma(\widetilde K_T\cap p^{-1}([a,b]))=0$.  Therefore,
there exists an open set $\widetilde U_T\subset U_1$ and constants
$a'<a$ and $b'>b$ such that
$$
\widetilde K_T\subset\widetilde U_T,\ \ \ \ \ 
\mu_\sigma(\widetilde U_T\cap p^{-1}([a',b']))\leq T^{-1}.
$$
Take $X_T\in\Psi_h^{\comp}(M)$ satisfying $|\sigma(X_T)|\leq 1$,
$\WFh(X_T)\subset \widetilde U_T$, and $X_T=1$ microlocally near
$\widetilde K_T$. Since $ F $ is bounded on $ L^2 ( M ) $, $ |\langle
F X_T u_j,u_j\rangle| \leq C\| X_T u_j \|_{L^2} $.
Hence~\eqref{e:hs-estimate} and~\eqref{e:weyl-law} give
\begin{equation}
  \label{e:decor-1}
\begin{gathered}
h^n\sum_{E_j\in [a,b]}|\langle F X_T u_j,u_j\rangle|
\leq C\bigg(h^n\sum_{E_j\in [a,b]}\| X_T u_j\|_{L^2}^2\bigg)^{\frac12}\\
\leq C(\|\sigma(X_T)\|_{L^2(p^{-1}([a',b']))}^2+\mathcal O_T(h))^{\frac12}
\leq C(T^{-1}+\mathcal O_T(h))^{\frac12},
\end{gathered}
\end{equation}
where $C$ denotes a constant independent of $T$ and $h$.

We now analyse the contribution of
$
F_1:=F(1-X_T).
$
For that define
$$
\langle F_1\rangle_T:=\frac1 T\int_0^T e^{itP(h)/h}F_1e^{-itP(h)/h}\,dt.
$$
For each eigenfunction $u_j$, we have
$$
\langle F_1 u_j,u_j\rangle=\langle\langle F_1\rangle_T u_j,u_j\rangle.
$$
We now take some $f\in C_{\mathrm c}^\infty(\mathbb R)$ such that $0\leq f\leq 1$
everywhere and $f=1$ near $[a,b]$. Then
by~\eqref{e:weyl-law},
\begin{equation}
  \label{e:decor-2}
\begin{gathered}
h^n\sum_{E_j\in[a,b]}|\langle F_1u_j,u_j\rangle|
=h^n\sum_{E_j\in [a,b]}|\langle \langle F_1\rangle_T u_j,u_j\rangle|
\\\leq C\bigg(h^n\sum_j f(E_j)\|\langle F_1\rangle_T u_j\|_{L^2}^2\bigg)^{\frac12}.
\end{gathered}
\end{equation}
We now write
$$
\begin{gathered}
h^n\sum_jf(E_j)\|\langle F_1\rangle_T u_j\|_{L^2}^2
=h^n\sum_jf(E_j)\langle\langle F_1\rangle_T^*\langle F_1\rangle_T u_j,u_j\rangle
\\=\frac1{ T^2}\int_0^T\int_0^T h^n \sum_j f(E_j)
\langle e^{i(s-t) P(h) / h }F_1^* e^{i ( t -s ) P(h) / h } F_1 u_j,u_j\rangle\,dtds.
\end{gathered}
$$
Since $\| e^{i(s-t) P(h) / h }F_1^* e^{i ( t -s ) P(h) / h }F_1
\|_{L^2\to L^2}$ is bounded uniformly in $t,s,h$, we estimate the
integral over the region $|t-s|\leq t_0$ using the upper bound on the
number of eigenvalues, ~\eqref{e:weyl-law},
\begin{equation}
  \label{e:decor-3}
\frac{ 1 } {T^2}\int_{0\leq t,s\leq T\atop |t-s|\leq t_0}h^n 
\sum_j f(E_j)\langle e^{i(s-t) P(h) / h
}F_1^* e^{i ( t -s ) P(h) / h }F_1   u_j,u_j\rangle\,dtds
\leq CT^{-1},
\end{equation}
where $C$ is again a constant independent of $T$ and $h$.

It remains to estimate the integral over the region $t_0\leq |t-s|\leq
T$.  For $t_0\leq |r|\leq T$, define the operator $G_r=e^{ i r P(h) /
h } F_1^* e^{ - i r P(h) / h } F_1$, and rewrite the studied integral
as
$$
\frac{1}{T^2}\int_{0\leq t,s\leq T\atop |t-s|\geq t_0}h^n
\sum_j f(E_j)\langle G_{s-t}u_j,u_j\rangle\,dtds.
$$
Take $\chi\in C_c^\infty(M^\circ)$ such that $F_1=F_1\chi$, and thus
$G_r=G_r\chi$, then by the cyclicity of the trace as in the proof of
Lemma~\ref{l:2}, we can replace $G_{s-t}$ by $\chi G_{s-t}$ with an
$\mathcal O_T(h^\infty)$ penalty. However, by Lemma~\ref{l:hans}, for
$t_0\leq |r|\leq T$,
\begin{equation}
\label{eq:rhorho}
\WFh'(\chi G_r)\subset\{( \rho' , \rho) :
\rho\in K_1\setminus \widetilde K_T,\
\varphi_r(\rho')\in K_1,\
\kappa(\varphi_r(\rho'))=\varphi_r(\kappa(\rho))\}.
\end{equation}
The definition of $ \widetilde K_T $ -- see \eqref{eq:exc} and
\eqref{eq:defKT}~-- shows that the set in \eqref{eq:rhorho}
does not intersect $ \Diag ( T^*M )$. 
This means that the operator $\chi G_r$
satisfies
the hypothesis of Lemma \ref{l:2}, 
and  by~\eqref{e:no-trace} we find
$$
\frac{1}{ T^2}\iint_{\substack{{0\leq t,s\leq T}\\{ |t-s|\geq t_0}}}
h^n\sum_j f(E_j)\langle \chi G_{s-t}u_j,u_j\rangle\,dtds
=\mathcal O_T(h^\infty).
$$
Combining this with~\eqref{e:decor-3} and recalling~\eqref{e:decor-1}
and~\eqref{e:decor-2}, we get
\[
h^n\sum_{ E_j \in [ a , b ] } |\langle F u_j,u_j\rangle|\leq (CT^{-1}+\mathcal O_T(h))^{\frac12},
\]
where $C$ is a constant independent of $T$ and $h$.  By choosing $T$
large and then $h$ small, we obtain~\eqref{eq:prop}.
\end{proof}
%
%

\section{Quantum ergodicity for restrictions}
\label{qer}

We will now prove Theorem~\ref{t-Sch} and we use the notation from the
second (semiclassical) part of Section~\ref{int}. To simplify the
presentation we put $ Q = id $.  The general case is similar.

We start with some geometric observations.  The condition
\eqref{eq:VE} shows that, in the notation of \eqref{eq:piE}, $ B_E :=
\pi_E ( \Sigma_E ) \subset T^* N , $ is a smooth manifold with a
smooth boundary.  Any $ \rho \in B_E \setminus \partial B_E $ is a
regular value of $ \pi_E $; moreover, $ \pi_E^{-1} ( \rho ) = \{
\rho_+, \rho_- \} $, $ \pi_E $ is a local diffeomorphism
near $ \rho_\pm $, and the involution $ \gamma_E $ is given 
by $ \gamma_E ( \rho_\pm ) = \rho_\mp $.  The Hamilton vector field $ H_p $ is 
transversal to $ \Sigma_ E $ at $ \rho_\pm $.

To prove Theorem~\ref{t-Sch} we can assume that $ [ a, b ] $ is a
small neighbourhood of a fixed energy level $ E $. We then decompose
any compactly supported $ A \in \Psi_h^0 ( N ) $ as follows:
\begin{equation}
\label{eq:AA}   A = \sum_{j=1}^J \widetilde A_{j,\epsilon}  + A_\epsilon + ( 1 -
X_E  )  A , \end{equation}
where
\begin{itemize}
\item  $ X_E \in \Psi_h^{\comp} ( N ) $ is microlocally equal to $ id $ near 
$ B_E \subset T^* N  $, and $ \WFh ( X_E ) $ is contained in small
neighbourhood of $ B_E $,
\item $ \widetilde A_{ j, \epsilon } \in \Psi_h^{\comp} ( N ) $, and  $
\WF_h ( \widetilde A_{j, \epsilon} ) $ is a small open subset of $B_E\setminus \partial B_E$,
\item $ A_\epsilon \in \Psi_h^{\comp} ( N ) $, $ \mu_\sigma ( \WFh (
  A_\epsilon ) ) < \epsilon $, where $\mu_\sigma$ is the symplectic measure.
\end{itemize} 

The estimate~\eqref{e:hs-estimate-2} in the second part of
Lemma~\ref{l:1} shows that the contribution of $ ( 1 - X_E) A $ is
negligible, and that the contribution of $ A_\epsilon $ will disappear
in $ \epsilon \to 0 $ limit.

Hence we only need to prove Theorem~\ref{t-Sch} for terms of the form
$ \widetilde A_{j , \epsilon } $. We assume now that $ \rho \in
B_E\setminus \partial B_E$, and that $\widetilde A\in\Psi_h^{\comp}(N)$
is microlocalized in a small neighborhood $V \subset T^* N $ of $ \rho
$. Choose small $\delta>0$ and define the set
$$
U:=\{\varphi_t(\tilde x,\tilde\xi) \; : \;  |t|<\delta,\
(\tilde x,\tilde\xi)\in \Sigma_{E+\tau}\cap \pi_{E+\tau}^{-1}(V),\ 
|\tau|<\delta\}.
$$
If $V$ and $\delta$ are small enough, then we can write
$
U=U_1\sqcup U_2,
$
where $U_\ell$, $\ell=1,2$, are open subsets of $T^*M$ (one of which
is a neighborhood of $\rho_+$ and the other of $\rho_-$) and moreover,
the maps $\kappa_\ell:U_\ell\to V\times\{|t|,|\tau|<\delta\}$,
\begin{equation}
\label{eq:prkap}
\kappa_\ell \; : \; \varphi_t(\tilde x,\tilde\xi)\longmapsto (\pi_{E+\tau}(\tilde
x,\tilde\xi),t,\tau)\, ,\ \
(\tilde x,\tilde\xi)\in \Sigma_{E+\tau}\cap U_\ell,\ 
|t|,|\tau|<\delta,
\end{equation}
are diffeomorphisms. The maps $\kappa_\ell$ are symplectomorphisms if
we consider $\{|t|,|\tau|<\delta\}$ as a subset of $T^* \mathbb
R_{t}$, with $\tau$ the momentum corresponding to $t$; in fact, we
provide a generating function for $\kappa_\ell$ in~\eqref{eq:psip} below.

Fix a local coordinate system $x=(x',x_n)$ on $M$ such that
$N=\{x_n=0\}$. We identify every half-density $u(x)|dx|^{1/2}$ on $M$
with the function $u(x)$ and every half-density $v(x)|dx'|^{1/2}$ on
$N$ with the function $v(x)$. Consider the operator
\begin{equation}
\label{e:r-def}
\mathcal R:C^\infty(M)\to C^\infty(N\times \mathbb R_t),\ \ 
\mathcal Ru(t) :=(e^{it(P(h)-E)/h}u)|_N,
\end{equation}
then
\begin{equation}
  \label{e:r-intertwining}
hD_t \mathcal R=\mathcal R (P(h)-E).
\end{equation}
Take $X_\ell\in\Psi_h^{\comp}(M)$ microlocalized inside $U_\ell$,
but such that
$$
X_\ell=1\text{ microlocally near }\kappa_\ell^{-1}(\WFh(\widetilde A)\times \{|\tau|,|t|\leq\delta/2\}).
$$
Let $\tilde\chi(t)\in C_{\mathrm c}^\infty(-\delta,\delta)$ be equal to 1 near $[-\delta/2,\delta/2]$. Then
$$
B_\ell:=\tilde\chi(t)\mathcal RX_\ell:C^\infty(M)\to C^\infty(N\times \mathbb R_t)
$$
are compactly microlocalized Fourier integral operators associated to
$\kappa_\ell$. This follows from an oscillatory representation of $
e^{ it ( P(h) - E ) / h }$ given in \cite[\S 10.2]{e-z}. Indeed, in
coordinates $x=(x',x_n)$ and in the notation of \cite[Theorem~10.4]{e-z},
\begin{equation}
\label{eq:Rfio}
{\mathcal B}_{\ell} u ( t , \tilde x ) := 
 {\frac{1}{(2\pi h)^n}}\int_{\RR^n}    \int_{\RR^n}  e^{{\frac             
    i h ( \psi
(t , \tilde x , 0, \eta )- y\cdot \eta )} } b_\ell(t , \tilde x, 0 ,\eta ;h)u(y)
\, dyd\eta  ,
\end{equation}
where
\begin{gather}
\label{eq:psip}
\begin{gathered}
\psi ( 0, x , \eta ) = x\cdot\eta,\
\partial_t \psi ( t , x , \partial_x \psi ) = p ( x , \partial_x\psi
)-E,\\
\varphi_t(x, \partial_x\psi(t,x,\eta))=(\partial_\eta\psi(t,x,\eta),\eta).
\end{gathered}
\end{gather}
The microlocalization inside $ U_\ell $ means that $ \partial_{\xi_n }
p( \tilde x, 0, \xi ) \neq 0 $, and that implies that
$\partial_{(t,\tilde x),\eta}^2\psi $ is nondegenerate. Hence, $ \psi (
t, \tilde x, 0, \eta ) $ is a generating function of $ \kappa_\ell $.

Now, let $u$ be an eigenfunction of $P(h)$ with eigenvalue
$E'=E+\lambda$, where $\lambda\in [-\delta/2,\delta/2]$.  Then
$\WFh(u)\subset p^{-1}([E-\delta/2,E+\delta/2])$ and thus
$$
u|_N=(X_1+X_2)u|_N=B_1u|_{t=0}+B_2u|_{t=0}\ \text{ microlocally near }\WFh(\widetilde A).
$$
Now, by~\eqref{e:r-intertwining} each $w_\ell:=B_\ell u$ solves $
hD_tw_\ell=\lambda w_\ell$ { microlocally near } $ \WFh(\widetilde A)$
for $ |t|\leq\delta/2$.  Therefore, $ w_\ell(t)=e^{it\lambda/h}
w_\ell(0) $ { microlocally near } $\WFh(\widetilde A)$ for $
|t|\leq\delta/2$.

Take $\chi(t)\in C_{\mathrm c}^\infty(-\delta/2,\delta/2)$ that integrates to
$1$. Then
$$
\langle\widetilde A (w_\ell|_{t=0}),w_k|_{t=0}\rangle_{L^2(N)}
=\langle (\chi(t) \otimes \widetilde A)w_\ell,w_k\rangle_{L^2(N\times \mathbb R_t)}
+\mathcal O(h^\infty).
$$
Therefore,
\begin{equation}
\label{eq:Ati}
\begin{split}
\langle\widetilde A(u|_N),(u|_N)\rangle_{L^2(N)} 
& =\sum_{\ell,k =1}^2  \langle\widetilde A (w_\ell|_{t=0}) , w_k |_{t=0}\rangle_{L^2(N)} 
+\mathcal O(h^\infty)\\
& =\sum_{\ell, k=1}^2\langle (\chi(t) \otimes \widetilde A)
w_\ell,w_k\rangle_{L^2(N\times \mathbb R_t)}+\mathcal O(h^\infty) \\
& =\sum_{\ell, k=1}^2\langle B_k^* (\chi(t) \otimes \widetilde A)
B_\ell u,u\rangle_{L^2(M)}+\mathcal O(h^\infty). 
\end{split}
\end{equation}
We now need to analyse the operators
$
B_{k\ell }:=B_{k}^*(\chi(t)\otimes \widetilde A)B_\ell .
$
This is split into two cases. For $k=\ell$, $B_{k \ell}$ is a
pseudodifferential operator and Theorem \ref{t-qe} can be applied with
$ B = B_{\ell \ell}$. Note that the operator $\chi(t)\otimes\widetilde A$
is not pseudodifferential,
in fact its non-semiclassical wavefront set contains points
$(t,t',x,x')$ with $t=t'$ and $(x,x')$ in the support
of the Schwartz kernel of $\widetilde A$. However, $B_{\ell\ell}$ is pseudodifferential
since $B_\ell,B_\ell^*$ are compactly microlocalized and thus we can replace
$\chi(t)$ by a compactly microlocalized operator in $\chi(t)\otimes A$, making the latter
pseudodifferential.

We need to compute the symbol of $B_{\ell \ell}$. For that, we use the
integral representation \eqref{eq:Rfio} and the stationary phase
method, applicable since $ \partial_{(t,\tilde x),\eta}^2 \psi $ is
nondegenerate.  More precisely, the Schwartz kernel, $ B_{\ell}^*
B_\ell ( z, y ) $ is given by
\[
\frac{ 1 } { ( 2 \pi h)^{2n } } \int\limits_{\RR^n_{t,\tilde x} \times
  \RR^n_\eta\times \RR^n_\zeta } e^{ \frac{i} h (  \psi ( t, \tilde x, 0, \eta ) - \psi  ( t
   , \tilde x, 0, \zeta) +  z \cdot \zeta  - y \cdot \eta
   )} b_\ell ( t , \tilde x, 0, \eta ) \overline{ b_\ell ( t , \tilde x, 0, \zeta )}\, d\tilde x dt
d\eta d\zeta .
\]
We apply the method of stationary phase in the $\tilde x,t,\eta$
variables.  The stationary point is given by $ \eta = \zeta $, $
\varphi_t (\tilde x, 0,\partial_x\psi_t(t,\tilde x, 0,\eta) ) = ( y ,
\eta) $, and the value of the phase at the stationary point is $ (z -
y)\cdot \zeta $.  The signature of the Hessian is $ 0 $ and, by
\eqref{eq:psip} and \cite[Theorem 10.6]{e-z}, the leading part of the
symbol in the region of interest is given by
\[ 
\bigg|\frac{  \det \partial_x \partial_\zeta \psi ( t , \tilde x, 0 , \zeta ) }
{  \det \partial_{(t, \tilde x)} \partial_\zeta  \psi( t , \tilde x, 0, \zeta )  
 }\bigg|  
= \frac{1 }{  | \partial_{\xi_n}  p
(  \tilde x , 0 , \partial_x 
\psi ( t , \tilde x, 0, \zeta )) | } , 
\]
where $ t = t ( y, \zeta)$ , $ \tilde x = \tilde x ( y , \zeta ) $,
are the critical points.

Recalling \eqref{eq:prkap}, it follows that
$$
\sigma(B_\ell^*B_\ell)\circ\kappa_\ell^{-1}=|  \partial_{\xi_n } p \circ\kappa_\ell^{-1}\circ \pi_0|^{-1}\text{ near }
\WFh(\widetilde A)\times \{|\tau|,|t|\leq\delta/2\},
$$
where $\pi_0:T^*(N\times \mathbb R_t)\to T^*(N\times \mathbb R_t)$
maps $(\tilde x,\tilde\xi,t,\tau)$ to $(\tilde x, \tilde\xi,0,\tau)$.
From here and by Egorov's Theorem applied to $\chi(t)
\otimes\widetilde A$, we get
$\sigma(B_{\ell\ell})\circ\kappa_\ell^{-1}=| \partial_{\xi_n } p
\circ\kappa_\ell^{-1}\circ\pi_0|^{-1}\sigma(\widetilde A)\chi(t)$ near
$\{|\tau|\leq\delta/2\}$.  Then
\begin{gather}
\label{eq:Ati1}
\begin{gathered}
\negint_{\{p=E_j\}}\sigma(B_{\ell\ell})\,d\mu_{E_j}={\frac 1
  {\mu_{E_j}  ( p^{-1} (E_j ) ) }}
\int_{\{\tau=E_j-E\}}
\sigma(B_{\ell\ell})\circ\kappa_\ell^{-1} \,d \tilde x d \tilde \xi  dt\\
= {\frac 1
  {\mu_{E_j}  ( p^{-1} (E_j ) ) }} \int_{\Sigma_{E_j}\cap U_\ell}
|\partial_{\xi_n} p |^{-1} \pi_{E_j}^* \sigma(\widetilde A) \,d x' d
\xi' ,
\end{gathered}
\end{gather}
where we parametrized $ \Sigma_{E_j } $ by $ ( x', \xi' ) \in B_{E_j} $.

Now, we consider the case $k \neq \ell $. Then $B_{k \ell }$ is a
Fourier integral operator with the canonical transformation $\kappa_{
k \ell } := \kappa_{k}^{-1}\circ\kappa_\ell $.  We want to apply the
decorrelation result given in Lemma \ref{l:fio-decorrelation}.

Using the definition~\eqref{eq:prkap} of $\kappa_\ell$, we see that
the canonical transformation $\kappa = \kappa_{ k \ell } $ can be
described as follows:
$$
\kappa(\varphi_s(\tilde x,\tilde\xi))=\varphi_s(\gamma_{E'}(\tilde x,\tilde\xi)),\
|s|<\delta,\ 
(\tilde x,\tilde\xi)\in\Sigma_{E'}\cap U_\ell.
$$
To apply Lemma \ref{l:fio-decorrelation} we need to verify the following:
there exists $t_0>0$ such that the set
\begin{equation}
\label{eq:EE}
\mathcal E:=\{ \rho \in U_\ell\cap \varphi_{-t}(U_\ell) \; : \; \exists \; t,\ |t|\geq t_0 \,, \ 
\varphi_t (\kappa(\rho )) =\kappa( \varphi_t ( \rho )) \}\subset
T^*M\,, 
\end{equation}
has $\mu_\sigma$-measure zero. To see this, suppose that $\rho\in \mathcal E$, $t$
is the corresponding time, and $s,s'\in (-\delta,\delta)$ are such that
$
\rho=\varphi_s(\tilde x,\tilde \xi)$,
$\varphi_t(\rho)=\varphi_{s'}(\tilde x',\tilde\xi')$,
$(\tilde x,\tilde\xi),(\tilde x',\tilde\xi')\in \Sigma_{E'}\cap U_\ell$.
Then $(\tilde x',\tilde\xi')=\varphi_{t+s-s'}(\tilde x,\tilde \xi)$ and
the condition~$\varphi_t(\kappa(\rho))=\kappa(\varphi_t(\rho))$ can be rewritten as
$$
(\tilde x,\tilde\xi)\in\Sigma_{E'},\
\varphi_{t+s-s'}(\tilde x,\tilde\xi)\in\Sigma_{E'},\
\varphi_{t+s-s'}(\gamma_{E'}(\tilde x,\tilde\xi))
=\gamma_{E'}(\varphi_{t+s-s'}(\tilde x,\tilde\xi)).
$$
Put $t_0>2\delta$, then $t+s-s'\neq 0$.  It now follows
from~\eqref{eq:h-dynl} that the set $\mathcal E$ from~\eqref{eq:EE}
has measure zero; by Lemma~\ref{l:fio-decorrelation}, the
contributions of $ B_{k\ell } $, $ k \neq
\ell $ to the sum~\eqref{eq:qe1}  go to $ 0 $ as $h\to 0$.

Going back to \eqref{eq:Ati}, \eqref{eq:154} and \eqref{eq:Ati1} this means for $
\widetilde A $ satisfying our localization assumptions
\[ 
h^n \sum_{E_j\in [ a, b ] }
\bigg| \langle\widetilde A(u_j|_N),(u_j|_N)\rangle_{L^2(N)} 
- {\frac 1 { V_j } } \sum_{ \ell = 1, 2 } \int_{\Sigma_{E_j} \cap U_\ell
  } |\partial_{\xi_n} p |^{-1} \pi_{E_j}^* \sigma(\widetilde A) \,dx'd\xi'  \bigg| =  o ( 1 ) , 
\]
where $ V_j := {\mu_{E_j} } ( p^{-1} (E_j ) ) $. Assume that the
function $f$ used to define restrictions of half-densities
in~\eqref{eq:resth} is equal to $x_n$, so that $u(x)|dx|^{1/2}$
restricts to $u(\tilde x,0)|dx'|^{1/2}$.  Using the canonical
transformation $\kappa_\ell$, we get the symplectic coordinates
$(x',t,\xi',\tau)$ on $U_\ell$, in which the measure $d\nu_{E_j}$
from~\eqref{eq:mL} is equal to
$V_j^{-1}|\partial_{\xi_n}p|^{-1}\,dx'd\xi'$.  We have thus
proved~\eqref{eq:qe1} with $ f = x_n$; the case of general $f$ follows
by taking the operator
$|\partial_{x_n}f|^{-1/2}A|\partial_{x_n}f|^{-1/2}$ in place of $A$.
This completes the proof of Theorem~\ref{t-Sch}.

\appendix

\section{Semiclassical quantum ergodicity with boundaries}
\label{s:appendix-a}

Let $ ( M , g ) $ be a smooth Riemannian manifold with a piecewise
smooth boundary, $ \partial M $. That means that $ M \subset
\widetilde M $ where $ \widetilde M $ is manifold without boundary to
which $ g $ extends smoothly, and $ \partial M = \bigcup_{j=1}^J N_j $
where $ N_j $ are smooth embedded hypersurfaces in $ \widetilde
M$. Denote by $\partial^\circ M\subset \partial M$ the open set of all
points at which the boundary is smooth, namely points contained in
exactly one of the hypersurfaces $N_j$; the complement $\partial
M\setminus \partial^\circ M$ has measure zero (with respect to the
surface measure on $\partial M$).

We consider an operator $ P ( h ) $ given by 
\eqref{eq:Ph} with Dirichlet boundary conditions. (One can take instead
any self-adjoint boundary conditions, as long as the Weyl
law~\eqref{e:weyl-law} is known to hold.)  Let $p(x,\xi)$ be its
principal symbol; we can extend it smoothly to $T^*\widetilde M$.  We
make the following assumption similar to~\eqref{eq:VE}, for $E\in
[a,b]$:
\begin{equation}
  \label{eq:VE2}
x\in \partial^\circ M,\
V ( x ) = E \implies  d V ( x ) \notin N^*_x \partial M. 
\end{equation}
Then $p^{-1}(E)$ and $T^*_{\partial^\circ M}M$ intersect transversally.
We write
$$
p^{-1}(E)\cap T^*_{\partial^\circ M}M=\Omega_E^+\sqcup \Omega_E^-\sqcup\Omega_E^0,
$$
where $(x,\xi)$ lies in $\Omega_E^+$ if the vector $H_p x\in T
\widetilde M$ is pointing outside of $M$, in $\Omega_E^-$ if this
vector is pointing inside $M$, and in $\Omega_E^0$ if $H_p x$ is
tangent to the boundary of $M$. The covectors in $\Omega_E^0$ are
called glancing, and under the assumption~\eqref{eq:VE2} this set has
measure zero inside $p^{-1}(E)\cap T^*_{\partial^\circ M}M$.

For $(x,\xi)\in p^{-1}(E)$, we define its broken Hamiltonian flow line
$\varphi_t(x,\xi)$ as follows. Assuming without loss of generality
that $t>0$, we consider the Hamiltonian flow line $\exp(tH_p)(x,\xi)$,
defined smoothly on $T^*\widetilde M$, and let $t_0$ be the first
nonnegative time when $\exp(tH_p)(x,\xi)$ hits the boundary. If this
happens at a non-smooth point of the boundary (i.e. on~$\partial
M\setminus \partial^\circ M$), or if $\exp(t_0 H_p)(x,\xi)\in
\Omega_E^0$, then the flow cannot be extended past $t=t_0$. Otherwise,
$\exp(t_0 H_p)(x,\xi)\in\Omega_E^+$ and there exists unique
$(x_0,\xi_0)\in \Omega_E^-$ such that the natural projections of
$\exp(t_0 H_p)(x,\xi)$ and $(x_0,\xi_0)$ onto $T^*\partial M$ are the
same.  We then define $\varphi_t$ inductively, by putting
$\varphi_t(x,\xi)=\exp(tH_p)(x,\xi)$ for $0<t<t_0$ and
$\varphi_t(x,\xi)=\varphi_{t-t_0}(x_0,\xi_0)$ for $t>t_0$.  For any
$T>0$, denote by
\begin{equation}
  \label{e:B-T}
\mathcal B_T\subset T^*M\cap p^{-1}([a,b])
\end{equation}
the closed set of all $(x,\xi)$ such one cannot define the flow
$\varphi_t(x,\xi)$ on the interval $[-T,T]$ using the above
procedure. As shown in~\cite[Lemma~1]{z-z}, for any $T$ the set
$\mathcal B_T\cap p^{-1}(E)$ has measure zero in $p^{-1}(E)$, and for
$|t|\leq T$, $\varphi_t$ is a volume preserving flow on
$p^{-1}(E)\setminus \mathcal B_T$.  See also~\cite[p.~310--311]{H3}
for a symplectically invariant description of the broken Hamiltonian
flow. Since the flow $\varphi_t$ is well-defined almost everywhere,
the standard ergodic theory applies to it.

We will use the following parametrix construction for the
Schr\"odinger propagator away from the set $\mathcal B_T$.  The
following lemma is a rephrasing of results of Christianson~\cite[\S
3.3]{c}:
\begin{lem}
  \label{l:hans}
Fix $T>0$.  Assume that $A\in\Psi_h^{\comp}(M^\circ)$ is supported away
from the boundary of $M$ and $\WFh(A)\subset p^{-1}([a,b])\setminus
\mathcal B_T$.  Then for each $\chi\in C_c^\infty(M^\circ)$ and for
each $t\in [-T,T]$, the operator $\chi e^{-itP/h}A$ is a
Fourier integral operator supported away from $\partial M$ and
associated to the restriction of $\varphi_t$ to a neighborhood of $\WFh(A)\cap
\varphi_t^{-1}(\supp\chi)$, plus an $\mathcal O(h^\infty)_{L^2(M)\to L^2(M)}$
remainder. The following version of Egorov's Theorem holds:
$$
\chi e^{itP/h}A e^{-itP/h}=A_{t,\chi}+\mathcal O(h^\infty)_{L^2(M)\to L^2(M)},
$$
where $A_{t,\chi}\in\Psi_h^{\comp}(M^\circ)$ is supported away from $\partial M$
and $\sigma(A_{t,\chi})=\chi (a\circ \varphi_t)$.
\end{lem}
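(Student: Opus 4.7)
The plan is to construct $\chi e^{-itP/h}A$ as an explicit semiclassical FIO by following the broken flow in short time steps and handling each reflection via a WKB parametrix. Since $\WFh(A)$ is compact and disjoint from $\mathcal B_T$, every bicharacteristic starting in $\WFh(A)$ is defined on $[-T,T]$ and crosses $\partial M$ only finitely often, always transversally and at points of $\partial^\circ M$. By continuity of the broken flow outside $\mathcal B_T$, together with compactness, I can choose $\delta>0$ and a finite microlocal partition of unity $A=\sum_\alpha A_\alpha$ such that for each $\alpha$ and each subinterval of length $\delta$ inside $[-T,T]$, the trajectory of any point in $\WFh(A_\alpha)$ experiences at most one transverse reflection, at a uniformly non-glancing point. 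Writing $e^{-itP/h}$ as a product of short-time propagators and using the composition rule for compactly microlocalized FIOs reduces the problem to the construction on a single short interval $[0,\delta]$, applied repeatedly.

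On such an interval two cases arise. If $\varphi_s(\WFh(A_\alpha))$ does not meet $\partial M$ for $s\in[0,\delta]$, I replace $P$ microlocally by its smooth extension to $\widetilde M$, so the standard WKB parametrix of \cite[\S 10.2]{e-z} represents $\chi e^{-isP/h}A_\alpha$ as a compactly microlocalized FIO associated to $\exp(sH_p)$. If a single transverse reflection occurs at some $(x_*,\xi_*)\in\partial^\circ M$, I work in local coordinates $x=(x',x_n)$ with $\partial M=\{x_n=0\}$ and $\partial_{\xi_n}p(x_*,\xi_*)\neq 0$. I represent the parametrix as a sum of incident and reflected WKB terms: the incident piece is again the free parametrix continued smoothly across $\{x_n=0\}$ via $\widetilde M$, and the reflected phase is obtained by solving the eikonal equation $p(x,\partial_x\varphi)=E$ with boundary data matching that of the incident phase on $\{x_n=0\}$ (the two roots of $p(x',0,\xi',\cdot)=E$ correspond precisely to $\pm$ the normal momentum, i.e.\ to $\gamma_E$). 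The reflected amplitude is determined order by order in $h$ by requiring that the total Dirichlet trace cancel on $\{x_n=0\}$, which amounts to solving a transport equation along the reflected rays; this is the construction recorded in \cite[\S 3.3]{c}. The $\mathcal O(h^\infty)$ error is absorbed into $\chi e^{-itP/h}A$ by using Duhamel's formula and the unitarity of the propagator.

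Patching these short-step FIOs via the composition rule gives the first assertion. For the Egorov statement, I write $\chi e^{itP/h}A e^{-itP/h}=(\chi e^{itP/h})\circ(A e^{-itP/h})$, a composition of FIOs associated to $\varphi_{-t}$ and $\varphi_t$ respectively, whose canonical relation is the graph of the identity near $\WFh(A)\cap\varphi_t^{-1}(\supp\chi)$; hence the composite lies in $\Psi_h^{\comp}(M^\circ)$, and the principal symbol $\chi\cdot(a\circ\varphi_t)$ is read off from the leading amplitudes on each WKB segment, using that transverse reflection preserves the symplectic volume and that the broken flow is symplectic. The chief technical obstacle is the bookkeeping at the reflections: one must check that the WKB amplitudes can be constructed to all orders, that phases from consecutive WKB segments glue consistently, and that the FIO composition remains uniform in $t\in[-T,T]$. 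The hypothesis $\WFh(A)\cap\mathcal B_T=\emptyset$ is exactly what excludes glancing rays and corners, where neither the eikonal equation nor the amplitude-matching at the boundary would admit smooth solutions.
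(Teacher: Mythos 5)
The paper itself gives no proof of Lemma~\ref{l:hans}: it is stated as a rephrasing of results of Christianson~\cite[\S 3.3]{c}, and your outline for the first assertion is essentially that construction (reduction to short time steps by compactness of $\WFh(A)$ and closedness of $\mathcal B_T$, interior WKB via the smooth extension of $P$ to $\widetilde M$, an incident-plus-reflected WKB pair at each uniformly transverse reflection with the reflected amplitude determined by cancellation of the Dirichlet trace, then Duhamel against the exact unitary group). Two small inaccuracies are worth fixing there: the phases for the propagator solve the time-dependent eikonal equation $\partial_t\psi+p(x,\partial_x\psi)=0$, not $p(x,\partial_x\varphi)=E$ (unless you first localize in energy and argue as in Lemma~\ref{l:fPb}); and Duhamel for the Dirichlet realization requires the approximate solution to satisfy the boundary condition exactly, so the $\mathcal O(h^\infty)$ boundary trace of the parametrix must first be removed (by subtracting a suitable $\mathcal O(h^\infty)$ extension, or via an energy estimate for the inhomogeneous boundary problem) before unitarity is invoked.

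The genuine gap is in your Egorov step. You factor $\chi e^{itP/h}Ae^{-itP/h}$ as $(\chi e^{itP/h})\circ(Ae^{-itP/h})$ and treat each factor as an FIO associated to $\varphi_{\mp t}$, but neither factor is of the form produced by the first assertion: $\chi e^{itP/h}$ carries no compactly microlocalized operator whose wavefront set avoids $\mathcal B_T$ (it sees glancing and corner trajectories emanating from $\supp\chi$, exactly the situations the lemma is built to exclude), and $Ae^{-itP/h}$ (equivalently its adjoint $e^{itP/h}A^*$) has no spatial cutoff keeping the output away from $\partial M$. The repair is standard but not free: write $A=A_1A_2+\mathcal O(h^\infty)$ with $A_1,A_2\in\Psi_h^{\comp}(M^\circ)$ and $\WFh(A_j)\cap\mathcal B_T=\emptyset$; apply the first assertion (with $t\mapsto-t$) to $\chi e^{itP/h}A_1$, and, after taking adjoints, to $A_2e^{-itP/h}\widetilde\chi$ for $\widetilde\chi\in C_c^\infty(M^\circ)$ equal to $1$ near $\pi(\varphi_{-t}(\WFh(A_2)))$. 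The leftover term $A_2e^{-itP/h}(1-\widetilde\chi)$ is not covered by the adjointed first assertion, since $1-\widetilde\chi$ is not compactly supported in $M^\circ$; showing it is $\mathcal O(h^\infty)$ amounts to a propagation/non-leakage statement for the exact Dirichlet propagator applied to data microlocalized off $\mathcal B_T$, which must be extracted from the parametrix together with an $L^2$ mass-conservation argument (using that the reflected WKB piece has a unimodular reflection coefficient, so the parametrix is microlocally norm-preserving). As written, your claim that the composition is pseudodifferential with symbol $\chi\,(a\circ\varphi_t)$ presupposes this control rather than proving it; once it is in place, the symbol computation you describe (symplecticity of the broken flow, cancellation of the Dirichlet sign upon conjugation) does go through.
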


The following basic Weyl law can be proved for the Dirichlet
realization of $ P ( h ) $ using the standard Dirichlet--Neumann
bracketing method (see~\cite[Chapter 15]{r-s}):
\begin{equation}
  \label{e:weyl-law}
(2\pi h)^n\#\{j: E_j\in [a,b]\}=\mu_\sigma(T^*M\cap p^{-1}([a,b]))+o(1)\text{ as }h\to 0.
\end{equation}
It follows that eigenfunctions cannot on average concentrate near the boundary:
\begin{lem}
  \label{l:nonconcentration}
Assume that $\chi\in C_c^\infty(M^\circ)$ satisfies $0\leq\chi\leq 1$. Then
for $a'<a<b<b'$,
$$
(2\pi h)^n\sum_{E_j\in [a,b]} \int_M (1-\chi)|u_j|^2\,d\Vol\leq \int_{T^*M\cap p^{-1}([a',b'])}
1-\chi\,d\mu_\sigma+o(1)\text{ as }h\to 0.
$$
\end{lem}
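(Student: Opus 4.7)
The plan is to bound the sum by a trace and then evaluate that trace using the Weyl law~\eqref{e:weyl-law} and Lemma~\ref{l:fPb}. First, I would choose $f\in C_{\mathrm c}^\infty((a',b'))$ with $0\leq f\leq 1$ and $f\equiv 1$ on $[a,b]$. Since $\int_M(1-\chi)|u_j|^2\,d\Vol = 1-\langle\chi u_j,u_j\rangle\geq 0$ and $f(E_j)\geq 1$ for $E_j\in[a,b]$, positivity yields
\[
(2\pi h)^n\sum_{E_j\in[a,b]}\int_M(1-\chi)|u_j|^2\,d\Vol \leq (2\pi h)^n\sum_j f(E_j)\bigl[1-\langle\chi u_j,u_j\rangle\bigr] = (2\pi h)^n\bigl[\Tr(f(P))-\Tr(\chi f(P))\bigr],
\]
the last equality holding because $f(P)$ is trace class (since $P$ has discrete spectrum and $f$ is compactly supported).

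Next, I would evaluate the two traces separately. The Weyl law~\eqref{e:weyl-law} extends to smooth $f$ via Abel summation against $f'$, yielding
\[
(2\pi h)^n\Tr(f(P)) = \int_{T^*M}f(p)\,d\mu_\sigma + o(1).
\]
For the second trace, Lemma~\ref{l:fPb} provides $A_f\in\Psi_h^{\comp}(M^\circ)$ with $f(P)\chi = A_f + \mathcal O_{L^2\to L^2}(h^\infty)$ and principal symbol $f(p)\chi(x)$; using cyclicity $\Tr(\chi f(P))=\Tr(f(P)\chi)$ together with the trace formula~\eqref{e:trace-xp},
\[
(2\pi h)^n\Tr(\chi f(P)) = \int_{T^*M}\chi(x)f(p)\,d\mu_\sigma + O(h).
\]

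Subtracting and using $0\leq f\leq \mathbf 1_{[a',b']}$ together with $1-\chi\geq 0$,
\[
(2\pi h)^n\sum_{E_j\in[a,b]}\int_M(1-\chi)|u_j|^2\,d\Vol \leq \int_{T^*M}(1-\chi)f(p)\,d\mu_\sigma + o(1) \leq \int_{T^*M\cap p^{-1}([a',b'])}(1-\chi)\,d\mu_\sigma + o(1),
\]
which is the desired inequality. The only mildly delicate point is the passage from the sharp-interval Weyl law to the smooth trace $(2\pi h)^n\Tr(f(P))$; this is a standard Abel-summation argument using the local uniform boundedness in $\lambda$ of $(2\pi h)^n\#\{E_j\leq\lambda\}$ that follows from~\eqref{e:weyl-law}. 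No further input beyond the Weyl law and Lemma~\ref{l:fPb} is needed.
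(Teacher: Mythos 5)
Your proof is correct and follows essentially the same route as the paper's: positivity of $f$ and $1-\chi$ to enlarge the sum, the Weyl law~\eqref{e:weyl-law} (extended to smooth $f$ by step-function approximation) for the ``$1$'' contribution, and the trace asymptotics~\eqref{e:trace-xp} (which rests on Lemma~\ref{l:fPb}) for the ``$\chi$'' contribution. The only cosmetic difference is that you phrase the two pieces as traces of $f(P)$ and $\chi f(P)$ rather than as eigenfunction sums, which changes nothing of substance.
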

\begin{proof}
Take $f\in C_c^\infty(a',b')$ such that $0\leq f\leq 1$ and $f=1$ near $[a,b]$. Since
$f$ and $1-\chi$ are nonnegative, it suffices to show that
$$
(2\pi h)^n\sum_j \int_M (1-\chi)f(E_j)|u_j|^2\,d\Vol=\int_{T^*M}(1-\chi)f(p)\,d\mu_\sigma+o(1)
\text{ as }h\to 0.
$$
This holds since the asymptotics for $1$ in place of $1-\chi$ follows from~\eqref{e:weyl-law},
while the asymptotics for $\chi$ follows from~\eqref{e:trace-xp}.
\end{proof}

We can now prove quantum ergodicity for manifolds
with boundary:
\begin{theo}
\label{t-qe}
Suppose that $ (M , g ) $ is a compact manifold with a
piecewise smooth boundary and 
 that $ u_j = u_j ( h) $ are 
normalized eigenfuctions of the Dirichlet 
realization of $ P (h)$. If~\eqref{eq:ener} and~\eqref{eq:VE2} hold, then
for any $ B \in \Psi_h^0 ( M^\circ ) $ compactly supported away from $ \partial M $, 
\begin{equation}
\label{eq:154}
h^n\sum_{E_j\in [a,b] }\bigg|\langle B
u_j,u_j\rangle_{L^2 ( M ) } 
- \negint_{p^{-1}(E_j)}\sigma(B)\,d\mu_{E_j}\bigg|\longrightarrow 0\,, \ \ h \to 0 .
\end{equation}

\end{theo}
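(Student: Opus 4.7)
The plan is to run the standard Shnirelman--Zelditch--Colin de Verdi\`ere argument in the semiclassical framework, using Lemmas~\ref{l:fPb}, \ref{l:1}, \ref{l:hans}, and~\ref{l:nonconcentration} to handle the boundary. First I reduce to the case of vanishing Liouville average. Writing $\bar b(E):=\mu_E(p^{-1}(E))^{-1}\int_{p^{-1}(E)}\sigma(B)\,d\mu_E$, which is smooth in $E$, extend it to some $F\in C^\infty(\mathbb R)$, pick $f\in C_c^\infty((a',b'))$ with $f\equiv 1$ on $[a,b]$, and pick $\chi\in C_c^\infty(M^\circ)$ equal to $1$ near $\pi(\WFh(B))$. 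By Lemma~\ref{l:fPb}, $M_h:=(fF)(P(h))\chi$ is a compactly microlocalized pseudodifferential operator on $M^\circ$ with principal symbol $f(p)F(p)\chi(x)$, and $\langle M_h u_j,u_j\rangle$ coincides with $\bar b(E_j)$ for $E_j\in[a,b]$ modulo an $O(h)$ error and a boundary correction controlled by Lemma~\ref{l:nonconcentration} (by letting $\chi\uparrow 1$). It therefore suffices to prove the analog of~\eqref{eq:154} with $B$ replaced by $\widetilde B:=B-M_h$ and the right-hand side replaced by $0$. Cauchy--Schwarz and the Weyl law~\eqref{e:weyl-law} further reduce this to the $L^2$ statement
\[
h^n\sum_{E_j\in[a,b]}|\langle \widetilde B u_j,u_j\rangle|^2\longrightarrow 0,\quad h\to 0,
\]
where $\widetilde B$ is a compactly microlocalized pseudodifferential operator on $M^\circ$ whose symbol has vanishing Liouville mean on each $p^{-1}(E)$, $E\in[a,b]$.

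For $T>0$ introduce $\langle \widetilde B\rangle_T:=T^{-1}\int_0^T e^{itP/h}\widetilde Be^{-itP/h}\,dt$. Since $\langle \widetilde B u_j,u_j\rangle=\langle\langle \widetilde B\rangle_T u_j,u_j\rangle$ for each eigenfunction, Cauchy--Schwarz gives $|\langle \widetilde B u_j,u_j\rangle|^2\leq\|\langle \widetilde B\rangle_T u_j\|^2$. The principal obstacle now is that $e^{itP/h}\widetilde B e^{-itP/h}$ fails to be a pseudodifferential operator near trajectories reflecting off $\partial M$. To bypass it, mimicking the strategy of Lemma~\ref{l:fio-decorrelation}, pick $X_T\in\Psi_h^{\comp}(M^\circ)$ with $|\sigma(X_T)|\leq 1$, microlocally equal to $1$ near the exceptional set $\mathcal B_T$ from~\eqref{e:B-T}, and with $\WFh(X_T)$ contained in an open subset of $T^*M^\circ$ whose $\mu_\sigma$-mass over $p^{-1}([a',b'])$ is at most $T^{-1}$; this is possible since $\mathcal B_T\cap p^{-1}([a,b])$ has measure zero. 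Splitting $\widetilde B=\widetilde B X_T+\widetilde B(1-X_T)$, Lemma~\ref{l:1} bounds the contribution of $\widetilde B X_T$ to the $L^2$ sum by $C(T^{-1}+O_T(h))$.

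For $\widetilde B':=\widetilde B(1-X_T)$, whose wavefront set avoids $\mathcal B_T$, Lemma~\ref{l:hans} shows that $\chi e^{itP/h}\widetilde B'e^{-itP/h}\in\Psi_h^{\comp}(M^\circ)$ has symbol $\chi(\sigma(\widetilde B')\circ\varphi_t)$ uniformly for $|t|\leq T$, with $\chi\in C_c^\infty(M^\circ)$ equal to $1$ on $\pi(\WFh(\widetilde B'))$; the $(1-\chi)$ correction is absorbed via Lemma~\ref{l:nonconcentration}. Integrating in $t$ and applying the trace formula~\eqref{e:trace-xp} to $\langle \widetilde B'\rangle_T^*\langle \widetilde B'\rangle_T$ yields
\[
h^n\sum_j f(E_j)\|\langle \widetilde B'\rangle_T u_j\|^2 = (2\pi)^{-n}\int_{T^*M}f(p)\,|\langle\sigma(\widetilde B')\rangle_T|^2\,d\mu_\sigma + O_T(h),
\]
where $\langle b\rangle_T(\rho):=T^{-1}\int_0^T b(\varphi_t(\rho))\,dt$ denotes the classical time average. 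Since $\sigma(\widetilde B')$ agrees with $\sigma(\widetilde B)$ outside a set of $\mu_\sigma$-measure $\leq T^{-1}$, this integral differs by $O(T^{-1})$ from the corresponding integral with $\sigma(\widetilde B)$. The von Neumann mean ergodic theorem, applied on each energy surface $p^{-1}(E)$ where $\varphi_t$ is ergodic by~\eqref{eq:ener}, gives $\langle \sigma(\widetilde B)\rangle_T\to 0$ in $L^2(p^{-1}(E),\mu_E)$; disintegrating $\mu_\sigma$ over the energy variable and applying dominated convergence,
\[
\int_{T^*M}f(p)|\langle\sigma(\widetilde B)\rangle_T|^2\,d\mu_\sigma\longrightarrow 0\quad\text{as }T\to\infty.
\]
Collecting estimates, the full $L^2$ sum is at most $C(T^{-1}+o_T(1))+O_T(h)$; choosing first $T$ large and then $h$ small yields~\eqref{eq:154}. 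The key technical point throughout is the non-pseudodifferential nature of the Schr\"odinger propagator across reflections, which is resolved by combining the Egorov theorem of Lemma~\ref{l:hans} on the complement of $\mathcal B_T$ with the quantitative Hilbert--Schmidt bound of Lemma~\ref{l:1}.
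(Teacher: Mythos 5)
Your proposal is, in substance, the paper's own argument: subtract an operator of the form (function of $P(h)$) times a spatial cutoff to normalize the Liouville average, excise a microlocal neighborhood of the bad set $\mathcal B_T$ and control it with the $L^2$--symbol bound \eqref{e:hs-estimate}, quantum--average, apply the boundary Egorov theorem of Lemma~\ref{l:hans} away from $\partial M$ together with Lemma~\ref{l:nonconcentration}, and finish with the von Neumann ergodic theorem and the double limit $T\to\infty$ after $h\to 0$. Two steps, however, are stated in a way that does not quite go through as written. First, the claim that the $(1-\chi)$ correction in the Egorov step ``is absorbed via Lemma~\ref{l:nonconcentration}'' is misplaced: with the cutoff on the left, the term to control is $\|(1-\chi)\langle \widetilde B'\rangle_T u_j\|$, whereas Lemma~\ref{l:nonconcentration} only controls $\|(1-\chi)u_j\|$, i.e.\ the cutoff acting directly on the eigenfunction. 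The fix is either to place the cutoff adjacent to $u_j$, i.e.\ work with $\langle \widetilde B'\rangle_T\chi$ and bound $\|\langle \widetilde B'\rangle_T(1-\chi)u_j\|\le C\|(1-\chi)u_j\|$ (this is exactly what the paper does, and then \eqref{e:hs-estimate} applied to the pseudodifferential operator $\langle \widetilde B'\rangle_T\chi$ replaces your trace-formula identity, which as stated presumes $\langle\widetilde B'\rangle_T^*\langle\widetilde B'\rangle_T$ is tractable globally), or to expand the double time integral, use cyclicity of the trace so the correction sits next to $(\widetilde B')^*$, where it is $\mathcal O(h^\infty)$.

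Second, with your choice $M_h=(fF)(P(h))\chi$ the symbol of $\widetilde B=B-M_h$ does \emph{not} have vanishing Liouville mean on $p^{-1}(E)$: its mean is $\bar b(E)\bigl(1-\mu_E(\chi)/\mu_E(p^{-1}(E))\bigr)$, which is nonzero because $\chi$ is compactly supported in $M^\circ$. Consequently the von Neumann theorem gives convergence of $\langle\sigma(\widetilde B)\rangle_T$ to this constant rather than to $0$, and you must carry the corresponding small parameter (governed by how close $\chi$ is to $1$ on $\pi(p^{-1}([a',b']))$) through the argument, taking $\chi\uparrow 1$ before $T\to\infty$ and $h\to 0$. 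The paper avoids this bookkeeping by choosing $\psi$ so that $\psi(E)\int_{p^{-1}(E)}\chi_T\,d\mu_E=\int_{p^{-1}(E)}\sigma(B)\,d\mu_E$, which makes the mean of $\sigma(B-\psi(P(h))\chi_T)$ exactly zero. Both points are repairable with the tools you already invoke, so the approach is sound; but as written these two steps need the above corrections.
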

\begin{proof}
Take $a',b'$ such that $a'<a<b<b'$ and~\eqref{eq:ener}
and~\eqref{eq:VE2} hold for $E\in [a',b']$. (If the flow is no longer
ergodic on $p^{-1}(E)$ when $E\not\in[a,b]$, we would need to consider
$a',b'$ close to $a,b$, for example $a'=a-1/T$ and $b'=b+1/T$ and
crudely estimate the contribution of $[a',b']\setminus [a,b]$ by the
Weyl law.) Take large $T>0$ and choose a cutoff function $\chi_T\in
C_c^\infty(M^\circ)$ such that $0\leq\chi_T\leq 1$ and
$$
\int_{T^*M\cap p^{-1}([a'-1,b'+1])}1-\chi_T\,d\mu_\sigma\leq T^{-1}.
$$
Let the function $\psi\in C_c^\infty(a'-1,b'+1)$ satisfy
$$
\psi(E)\int_{p^{-1}(E)}\chi_T\,d\mu_E=\int_{p^{-1}(E)}\sigma(B)\,d\mu_E,\ E\in [a',b'].
$$
By Lemma~\ref{l:nonconcentration}, it is enough to show that for
$T$ arbitrarily large but fixed, \eqref{eq:154} holds for the operator
$B-\psi(P(h))\chi_T$, whose symbol integrates to zero on $p^{-1}(E)$ for
$E\in [a',b']$; therefore, without loss of generality we assume that
\begin{equation}
  \label{eq:zeroint}
\int_{p^{-1}(E)}\sigma(B)\,d\mu_E=0,\
E\in [a',b'].
\end{equation}
By the elliptic estimate (see for instance~\cite[Proposition~3.2]{d-g}),
we may assume that $\WFh(B)\subset
p^{-1}((a',b'))$ and in particular $B\in\Psi_h^{\comp}$.  The set
$\mathcal B_T$ defined in~\eqref{e:B-T} is closed and has measure
zero; therefore, we can write $B=B'_T+B''_T$, where $\WFh(B'_T)\cap
\mathcal B_T=\emptyset$ and
$\|\sigma(B''_T)\|_{L^2(p^{-1}[a',b'])}\leq
T^{-1}$. By~\eqref{e:hs-estimate}, the contribution of $B''_T$
to~\eqref{eq:154} goes to zero in the limit $\lim_{T\to
\infty}\limsup_{h\to 0}$; therefore, we can replace $B$ by $B'_T$
in~\eqref{eq:154}.  Define the quantum averaged operator
$$
\langle B'_T\rangle_T:={1\over T} \int_0^T e^{itP/h}B'_Te^{-itP/h}\,dt.
$$
Then by Lemma~\ref{l:hans}, $\langle B'_T\rangle_T\chi_T$ is, up to an
$\mathcal O(h^\infty)_{L^2\to L^2}$ remainder, a pseudodifferential
operator in $\Psi_h^{\comp}$ compactly supported inside $M^\circ$ and
with principal symbol
$$
\sigma(\langle B'_T\rangle_T\chi_T)=\chi_T\langle\sigma(B'_T)\rangle_T={\chi_T\over T}
\int_0^T \sigma(B'_T)\circ \varphi_{t}\,dt.
$$
Since each $u_j$ is an eigenvalue of $P(h)$, we can write
the left-hand side of~\eqref{eq:154} as
$$
h^n\sum_{E_j\in [a,b]}|\langle \langle B'_T\rangle_T u_j,u_j\rangle|;
$$
using the Weyl law~\eqref{e:weyl-law} and Cauchy--Schwarz, we see that it remains to prove that
$$
\lim_{T\to \infty}\limsup_{h\to 0}h^n\sum_{E_j\in [a,b]}\|\langle B'_T\rangle_T u_j\|_{L^2}^2=0.
$$
We can replace $\langle B'_T\rangle_T$ here
by $\langle B'_T\rangle_T\chi_T$, as
$$
\limsup_{h\to 0}(2\pi h)^n\sum_{E_j\in [a,b]}\|(1-\chi_T)u_j\|_{L^2}^2\leq T^{-1}
$$
by Lemma~\ref{l:nonconcentration}.
By~\eqref{e:hs-estimate}, it remains to show that
$$
\lim_{T\to \infty}\|\sigma(\langle
B'_T\rangle_T\chi_T)\|_{L^2(p^{-1}([a',b']))} \leq
\lim_{T\to \infty}\|\langle\sigma(B'_T)\rangle_T  \|_{L^2(p^{-1}([a',b']))} 
 = 0.
$$
For this, we write for each $E\in [a',b']$,
$$
\|\langle\sigma(B'_T)\rangle_T\|_{L^2(p^{-1}(E))}
\leq \|\langle \sigma(B)\rangle_T\|_{L^2(p^{-1}(E))}
+\|\langle \sigma(B''_T)\rangle_T\|_{L^2(p^{-1}(E))}.
$$
The first term on the right-hand side converges to 0 when $T\to
\infty$ by~\eqref{eq:zeroint} and the von Neumann ergodic theorem (see
for example \cite[Theorem 15.1]{e-z}), while the second term is
bounded by $\|\sigma(B''_T)\|_{L^2(p^{-1}(E))}$.
\end{proof}

\section{From semiclassical to high energy asymptotics}
\label{s:appendix-b}

In this appendix we specialize to $ P ( h ) = - h^2 \Delta_g $ and 
show how Theorem \ref{t-Sch} implies the results of
\cite{t-z}. 

Suppose that $ ( M , g ) $ is a compact Riemannian manifold 
with a piecewise smooth boundary in the sense of Appendix~\ref{s:appendix-a}, 
and with an ergodic broken geodesic flow  $ \varphi_t : S^*M \to S^*M $.
Suppose that $ N \subset M$ is 
an open smooth hypersurface whose closure is disjoint from the boundary.
The energy surface $p^{-1}(1)=S^*M$ is the cosphere bundle of $M$ and
$\Sigma_1=S^*_NM$ is the restriction of $S^*M$ to $N$;
$B_1=\pi_1(\Sigma_1)=B^*N$ is the coball bundle of $N$ and
$\gamma_1:\Sigma_1\to \Sigma_1$ is the reflection across the
orthogonal complement of the conormal bundle $N^*N\subset T^*_NM$.

The dynamical assumption~\eqref{eq:h-dynl} becomes
\begin{gather}
\label{eq:h-dyn}
\begin{gathered}
\text{ The set of $ \rho  \in 
  S_N^*M$ satisfying }
\varphi_t ( \rho)\in S^*_NM\\
\text{and }\varphi_t ( \gamma_1 ( \rho)) = \gamma_1 ( \varphi_t (\rho  ) )
\ \text{ for some $ t \neq 0 $, has measure $ 0 $.}
\end{gathered}
\end{gather}

Let $ \{ u_j\}_{j=0}^\infty  $ be the complete set 
of  eigenfunctions of the Laplacian  on $ ( M , g ) 
$:
\[  - \Delta_g u_j = \lambda_j^2 u_j , \ \ 
\| u_j \|_{L^2} =1 , \ \  0 = \lambda_0 < \lambda_1 \leq \lambda_2
\leq \cdots . \]

The statement of the theorem uses the standard concept of a
(nonsemiclassical) pseudodifferential operator on a manifold~-- see
\cite[\S 18.2]{H3}.

\begin{theo}
\label{t-Lap}
Let $ N $ be a smooth open hypersurface satisfying \eqref{eq:h-dyn}
with closure disjoint from the boundary. 
Suppose that $ A \in \Psi^0_{\rm phg} ( N ) $ is a  classical 
pseudodifferential operator on $ N $, compactly supported inside $ N $.
Put $    v_j :=  u_j |_ N  $. Then
\begin{equation}
\label{eq:qe}  \frac{1} { \lambda^n} \sum_{ \lambda_j \leq \lambda }
\bigg|  \langle A v_j , v_j \rangle_{L^2 ( N, d \vol_{g|_N} )} -
  {\int \! \! }_{ S^*_N M }  \pi_1^* \sigma ( A ) 
\,  d \nu_1   \bigg| \longrightarrow 0 ,  \
\ \ \lambda \to \infty ,  \end{equation}
where $ \sigma ( A ) $  is the principal symbol of $ A $ (a homogeneous
function of degree $ 0 $ on $ T^*N \setminus \{ 0 \} $), and  the measure
$ \nu_1 $ is defined in \eqref{eq:mN}.
\end{theo}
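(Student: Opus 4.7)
The plan is to apply Theorem~\ref{t-Sch} dyadically in frequency via the rescaling $h = 1/\lambda$, which turns a Laplace eigenfunction $u_j$ into a semiclassical eigenfunction of $P(h) = -h^2\Delta_g$ with eigenvalue $E_j(h) = h^2\lambda_j^2$. I choose $f$ in~\eqref{eq:resth} to be the signed geodesic distance to $N$, so that $|df|_g \equiv 1$ near $N$ and the half-density restriction agrees with pointwise restriction paired with $d\vol_{g|_N}$. The classical pdo $A \in \Psi^0_{\mathrm{phg}}(N)$ is replaced, up to negligible error on $v_j = u_j|_N$, by a compactly microlocalized semiclassical operator $A_h \in \Psi_h^{\comp}(N)$ with symbol $\chi(|\xi|^2)\sigma(A)(x,\xi/|\xi|)$, where $\chi \in C_c^\infty((0,\infty))$ equals $1$ on $[1/4,1]$; this is legitimate because $\sigma(A)$ is homogeneous of degree $0$ and $v_j$ has semiclassical wavefront set inside $\overline{B^*N}$ when $E_j(h) \in [1/4,1]$. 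I also take $Q_h := \chi(-h^2\Delta_g) \in \Psi_h^{\comp}(M)$, so that $|\sigma(Q_h)|^2 \equiv 1$ on $\Sigma_{E_j}$ for every such $E_j$.

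The scaling $\xi \mapsto \xi/\sqrt{E}$ sends $\Sigma_E$ onto $\Sigma_1 = S^*_N M$, pushes the normalized measure $\nu_E$ forward to $\nu_1$, and preserves $\sigma(A)$ by $0$-homogeneity, so $\int_{\Sigma_E}\pi_E^*\sigma(A)\,d\nu_E = \int_{S^*_N M}\pi_1^*\sigma(A)\,d\nu_1$ for every $E > 0$: the target in~\eqref{eq:qe1} is independent of $E$. Theorem~\ref{t-Sch} at $h = 1/\lambda$ on $[a,b] = [1/4,1]$ therefore yields
\[
\lambda^{-n}\sum_{\lambda_j \in [\lambda/2,\lambda]}\bigg|\langle A v_j,v_j\rangle_{L^2(N)} - \int_{S^*_N M}\pi_1^*\sigma(A)\,d\nu_1\bigg| \longrightarrow 0, \ \ \lambda \to \infty.
\]
Repeating the same argument with $h_k := 2^k/\lambda$ on each dyadic shell $I_k := [2^{-k-1}\lambda, 2^{-k}\lambda]$ (for $2^k \le \lambda$) produces a shell contribution $2^{-kn}\phi_k(\lambda)$ to~\eqref{eq:qe}, since $\lambda^{-n} = 2^{-kn}h_k^n$, with $\phi_k(\lambda) \to 0$ as $\lambda \to \infty$ for each fixed $k$. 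The a priori bound~\eqref{e:hs-estimate-3} together with $L^2$-boundedness of $A$ gives $\phi_k(\lambda) \le C$ uniformly in $k$ and $\lambda$, so dominated convergence on the counting measure (summable majorant $C\cdot 2^{-kn}$) drives the total dyadic sum to $0$; the bottom tail $\lambda_j \le 1$ is a fixed finite set contributing $O(\lambda^{-n})$.

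The main obstacle -- and the whole reason Appendix~\ref{s:appendix-b} is needed -- is the degeneration of Theorem~\ref{t-Sch} at $E = 0$: the normalizer $\mu_E(p^{-1}(E))$ in~\eqref{eq:mL} scales like $E^{(n-2)/2}$, so $\nu_E$ cannot be continuously extended to $E = 0$ and Theorem~\ref{t-Sch} is not applicable to any interval touching the origin. The dyadic scheme circumvents this by confining every application to the fixed positive window $[1/4,1]$, where $\nu_E$ is uniformly well-behaved, and Weyl's law absorbs the small-eigenvalue remainder.
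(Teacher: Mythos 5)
Your overall frame --- rescaling $h=1/\lambda$, choosing $f$ to be the signed distance so that half-density restriction matches restriction of functions, observing that the limit measure integral is $E$-independent by homogeneity, and summing Theorem~\ref{t-Sch} over dyadic windows $h_k=2^k/\lambda$ with a uniform bound coming from \eqref{e:hs-estimate-3} --- is exactly the paper's. But the step you dismiss in one sentence is the heart of the matter, and as stated it fails. You replace the classical operator $A$ by $A_h=\Op_h\bigl(\chi(|\xi|^2)\,\sigma(A)(x,\xi/|\xi|)\bigr)$ with $\chi\in C_c^\infty((0,\infty))$, $\chi=1$ on $[1/4,1]$, claiming negligible error because $\WFh(v_j)\subset\overline{B^*N}$. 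That inclusion in fact undermines the claim: for $E_j\in[1/4,1]$ the projection $\pi_{E_j}$ maps $\Sigma_{E_j}$ onto the \emph{full} coball $\{|\xi'|_{g|_N}\le\sqrt{E_j}\}$, not onto a shell, so the tangential frequencies of $v_j$ go all the way down to $0$ (covectors in $S^*_NM$ near the conormal direction project near the zero section of $T^*N$). On the fixed region $\{|\xi'|^2<1/4\}$, which has positive $\nu_{E_j}$-measure, your cutoff is not $1$, so $Av_j$ and $A_hv_j$ differ at leading order there; and even granting the replacement, Theorem~\ref{t-Sch} applied to $A_h$ yields the limit $\int\pi_{E_j}^*\bigl(\chi(|\xi'|^2)\sigma(A)\bigr)\,d\nu_{E_j}$, which differs from the desired $\int_{S^*_NM}\pi_1^*\sigma(A)\,d\nu_1$ by an $O(1)$ amount. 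You cannot repair this by taking $\chi\equiv1$ down to $0$, since $\sigma(A)(x,\xi/|\xi|)$ is singular at $\xi=0$ and the resulting symbol is not admissible.

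This low tangential-frequency problem is precisely why Appendix~\ref{s:appendix-b} exists, and it is not removed by your dyadic decomposition, which only handles the finitely many small $\lambda_j$: it recurs inside every dyadic window. The paper's resolution is a two-parameter argument: introduce $A_\epsilon=\Op_h\bigl(\sigma(A)(1-\chi(|\xi'|_{g|_N}/\epsilon))\bigr)$, note the limiting integrals agree up to $O(\epsilon)$, and prove the averaged discrepancy bound \eqref{eq:qee}, i.e. $h^n\sum_{h\lambda_j\in[a,b]}|\langle(A-A_\epsilon)v_j,v_j\rangle|=O(\epsilon)+O_\epsilon(h)$, before letting $h\to0$ and then $\epsilon\to0$. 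Proving \eqref{eq:qee} is genuine work: the commutator-type operator norm estimate \eqref{e:key-claim} (writing the discrepancy against $hX$ for vector fields $X$), a splitting $1=B_0+(1-B_0)$ with $B_0$ microsupported in an $\epsilon^{1/2}$-neighborhood of the zero section, and, crucially, the averaged restriction bounds \eqref{e:hs-estimate-2}--\eqref{e:hs-estimate-3} of Lemma~\ref{l:1} together with $\|\sigma(B_0)\|_{L^2}\le\epsilon^{(n-1)/4}$. Your proposal contains no analogue of this $\epsilon$-regularization and averaged low-frequency mass estimate, so the central step of Theorem~\ref{t-Lap} is missing; the parts you do supply (choice of $f$, $E$-independence, dyadic summation) coincide with the paper's.
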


\noindent{\bf Remark.} 
Theorem \ref{t-Sch} allows more general restrictions $ a u_j |_N + b
\lambda_j^{-1} \partial_\nu u_j |_N $, for $a,b\in C^\infty(N)$. We
note also that the nonsemiclassical formulation of quantum ergodicity
only implies the angular equidistribution of $ v_j $ in $ T_x^* N
$. That is natural for the standard quantum ergodicity since $ u_j $
concentrate on $ S^* M $ but not in this case as $ v_j $'s can be
microsupported anywhere in $ B^* N $. That is remedied in the
semiclassical Theorem \ref{t-Sch}.

\begin{proof}
To show how Theorem~\ref{t-Lap} follows from Theorem~\ref{t-Sch} we
put $ V \equiv 0 $ and identify $ L^2 ( M , \Omega^{1/2}_M ) $ with $
L^2 ( M , d \vol_g ) $ by writing half-densities as $ u ( x ) | d
\vol_g |^{1/2} $, where $ u \in L^2 ( M , d \vol_g ) $.

Let $ x= ( x', x_n ) $ be normal geodesic coordinates near $ ( 0 , 0 )
\in N$, in which $ N = \{ x_n = 0 \} $, $ p ( x , \xi', \xi_n) =
\xi_n^2 + r ( x, \xi') $, and $ r(x', 0,\xi') $ is the dual of the
restriction metric $g|_N$. Suppose that $ f $ satisfies $ f |_N = 0 $,
$ |d f ( x ) |_{g} = 1 $. In the chosen coordinates the last condition
means that $ \partial_{x_n} f = 1 $, $ f = 0 $, on $ N $.  Hence the
restriction of half-densities \eqref{eq:resth} obtained using this
choice of $ f $ shows that, we obtain an identification with the
restriction of functions $ u | _N \in L^2 ( N , d \vol_{g|_N }) $.

We now write out locally the measure $\nu_1$ from~\eqref{eq:mL}.
In our coordinates, $ S^* M $ can be parametrized by $ ( 
x' , x_n,  \xi' ) $, $ r ( x' , \xi') \leq 1$, $ \xi_n = \pm ( 1 - r ( x,
\xi'))^{1/2} $ (the parametrization degenerates at $ r ( x, \xi')
=1$). The Liouville measure is obtained by requiring $ d \mu_1 \wedge
d p = dx d \xi $, and
\[
d \mu_1 = \frac{ 1 }{ 2 | \xi_n| } dx  d \xi' = 
\frac{ 1 }{ 2 ( 1 - r ( x,\xi'))^{1/2} } dx  d \xi' .
\]
In the notation of \eqref{eq:mN} this gives
\[
d \nu_1  =  \frac{ 1 } { \mu_1 ( S^*M ) }\frac1 { 2 \sqrt{ 1 - r ( x', 0, \xi') }   } dx' d \xi' ,
\ \  \ \  B_1 = \{ ( x' , \xi' ) :  r ( x', 0, \xi' ) \leq 1 \} \,,  
\]
where we parametrized $ S_N^*M = \{ ( x' , 0, \xi ) : \xi_n^2 + r (
x', 0, \xi' ) = 1 \} $ by $ ( x', \xi' ) \in B_1 $.

To pass from the semiclassical result to the special case of the high
energy result we put $ E_j = h^2 \lambda_j ^2 $, $ h = 1/\lambda $.
The difficulty lies in controlling low frequency contributions and
estimates \eqref{e:hs-estimate-2} and \eqref{e:hs-estimate-3} are
crucial for that.

Let $\widehat A $ be a classical pseudodifferential operator of order
$ 0 $ on $ N$, with a compactly supported Schwartz kernel in $ N
$. (Henceforth operators with hats denote polyhomogeneous operators,
while operators without hats denote semiclassical operators.)  Its
principal symbol $ \sigma ( \widehat A ) $ is a homogeneous function
of degree $ 0 $ on $ T^*N $. We define $ A_\epsilon \in \Psi_h^0 (N ) $
by putting
\[
A_\epsilon := \Op_h \big( \sigma ( \widehat A) ( 1 - \chi ( | \xi'|_{ g|_ N }/
\epsilon  ) )\big) , \ \
\chi \in C^\infty_{\rm{c}}  ( {\mathbb R}) \, ,\ \chi ( t  ) = 1 , \ \ |
t |  \leq 1 .
\]
Theorem~\ref{t-Sch} shows that for $ 0 < a < b $, and $ v_j = u_j
|_N$,
\begin{equation}
\label{eq:qe2}  h^n \sum_{  h\lambda_j  \in  [ a, b ] }
\bigg|    \langle A_\epsilon   v_j  , v_j  \rangle_{ L^2 ( N , d \vol_{g|_N} ) \
}   - {\int \! \! }_{ \Sigma_{E_j}  }
\pi_{E_j  } ^ *\sigma ( A_\epsilon )  \,d \nu_{E_j  }
 \bigg| \longrightarrow 0 ,  \ \ h = 1/\lambda \to 0 .
\end{equation}
We also have 
\[
{\int \! \! }_{ \Sigma_{E_j}  } 
\pi_{E_j  } ^ *\sigma ( A_\epsilon  )  d \nu_{E_j  }  = 
 {\int \! \! }_{ S^*_N M   }  \pi_1^* \sigma ( \widehat A ) \,d \nu_1 +
 {\mathcal O} ( \epsilon )  ,
\]
and hence the result will follow once we show that
\begin{equation}
\label{eq:qee} 
 h^n \sum_{ h \lambda_j  \in [ a, b ] } 
 \left|    \langle ( \widehat A - A_\epsilon )  v_j  , v_j  \rangle_{ L^2 ( N , d
     \vol_{g|_N} ) }
\right|  =  {\mathcal O} ( \epsilon ) + {\mathcal
     O}_\epsilon ( h ) .
\end{equation}
Indeed, \eqref{eq:qe2} and~\eqref{eq:qee} together give, for $[a,b]=[1,2]$,
\begin{equation}
  \label{e:qez}
h^n\sum_{h\lambda_j\in [1,2]}
\bigg|\langle \widehat A v_j,v_j\rangle_{L^2(N,d\vol_{g|_N})}-{\int\!\!}_{S^*_NM}\pi_1^*\sigma(\widehat A)\,d\nu_1
\bigg|\to 0\text{ as }h\to 0.
\end{equation}
Summing~\eqref{e:qez} for $h=2^k\lambda^{-1}$, $1\leq k\leq\log_2\lambda$, we get~\eqref{eq:qe}.

We now prove~\eqref{eq:qee}. Using \eqref{e:hs-estimate-3}, it will follow from
\begin{equation}
\label{eq:qeee}
\lim_{\epsilon\to 0}\limsup_{h\to 0}h^n \sum_{ h \lambda_j  \in [ a, b ] } 
\|  (\widehat A-A_\epsilon)   v_j(h) \|^2=0.
\end{equation}
For this, we first claim that for any vector field $X$ on $N$,
\begin{equation}
  \label{e:key-claim}
\|(\widehat A-A_\epsilon)hX\|_{L^2\to L^2}\leq C\epsilon+\mathcal O_\epsilon(h).
\end{equation}
Indeed, the left-hand side of \eqref{e:key-claim} is~$\mathcal O(h)$
if we put an operator in the class $\Psi^{-1}_{\mathrm{phg}}$ or
$h\Psi_h^{-1}$ in place of $\widehat A-A_\epsilon$, which means that we
can reduce to local coordinates, in which we can assume
$X=\partial_{y_1}$ and the full symbol of $(\widehat A-A_\epsilon)hX$
in the non-semiclassical left quantization becomes, up to
$\Psi^{-1}_{\mathrm{phg}}+h\Psi_h^{-1}$ terms,
$$
r(y,\eta;h):=h\eta_1 a^0(y,\eta/|\eta|)\big(
(1-\chi(|\eta|))-(1-\chi(h|\eta|_{g|_N}/\epsilon))\big).
$$
However, $\partial^\alpha_y \partial^\beta_\eta r(y,\eta;h)=\mathcal
O(\epsilon+h)\langle\eta\rangle^{-|\beta|}$ (here the first cutoff
gives the $\mathcal O(h)$ term, while the second cutoff gives the
$\mathcal O(\epsilon)$ term); therefore, by the $L^2$ boundedness of
classical pseudodifferential operators, we get~\eqref{e:key-claim}.

Now, let $B_0\in\Psi_h^{\comp}(N)$ be a semiclassical pseudodifferential
operator equal to the identity microlocally near the zero section of
$T^*N$, but supported inside an $\epsilon^{1/2}$ sized neighborhood of
the zero section. Then we can write
$$
1-B_0=\sum_k (hX_k)B^k_0+\mathcal O_\epsilon(h)_{L^2\to L^2}
$$
for
some vector fields $X_k$ (independent of $\epsilon$) and some
$B^k_0\in\Psi_h^{\comp}(N)$ (with $L^2\to L^2$ norm $\mathcal O(\epsilon^{-1/2})$);
by~\eqref{e:key-claim}, we have
$$
\|(\widehat A-A_\epsilon)(1-B_0)\|_{L^2\to L^2}\leq C\epsilon^{\frac12}+\mathcal
O_\epsilon(h)
$$
and thus by~\eqref{e:hs-estimate-3}, the estimate~\eqref{eq:qeee}
holds for $(\widehat A-A_\epsilon)(1-B_0)$. Same estimate holds for
$(\widehat A-A_\epsilon)B_0$, by recalling that $\|\widehat
A-A_\epsilon\|_{L^2\to L^2}=\mathcal O(1)$ and
using~\eqref{e:hs-estimate-2} together with the bound
$\|\sigma(B_0)\|_{L^2}\leq\epsilon^{(n-1)/4}$.  This finishes the proof
of~\eqref{eq:qeee} and thus of Theorem~\ref{t-Lap}.
\end{proof}

\def\arXiv#1{\href{http://arxiv.org/abs/#1}{arXiv:#1}}

\end{document}